\newcommand{\msl}{\mathscr{L}}
    \newcommand{\BA}{{\mathbb {A}}} 
    \newcommand{\BC}{{\mathbb {C}}} \newcommand{\BD}{{\mathbb {D}}}
     \newcommand{\BF}{{\mathbb {F}}}
    \newcommand{\BQ}{{\mathbb {Q}}}
     \newcommand{\BZ}{{\mathbb {Z}}}
    \newcommand{\CG}{{\mathcal {G}}} \newcommand{\CH}{{\mathcal {H}}}
    \newcommand{\CK}{{\mathcal {K}}} 
     \newcommand{\CN}{{\mathcal {N}}}
    \newcommand{\CO}{{\mathcal {O}}}
     \newcommand{\RN}{{\mathrm {N}}}
    \newcommand{\fa}{{\mathfrak{a}}} 
     \newcommand{\ff}{{\mathfrak{f}}}
    \newcommand{\fg}{{\mathfrak{g}}}
     \newcommand{\fp}{{\mathfrak{p}}}
    \newcommand{\fq}{{\mathfrak{q}}}
    \newcommand{\ad}{{\mathrm{ad}}}
    \newcommand{\Char}{{\mathrm{Char}}}
    \newcommand{\cyc}{{\mathrm{cyc}}}
    \newcommand{\Gal}{{\mathrm{Gal}}} 
    \newcommand{\Hom}{{\mathrm{Hom}}}
    \newcommand{\ord}{{\mathrm{ord}}} \newcommand{\rank}{{\mathrm{rank}}}
     \newcommand{\Pic}{\mathrm{Pic}}
    \renewcommand{\mod}{\ \mathrm{mod}\ }
    \newcommand{\Sel}{{\mathrm{Sel}}}
    \newcommand{\tor}{{\mathrm{tor}}}
    \newcommand{\RTr}{{\mathrm{Tr}}}
    \newcommand{\Vol}{{\mathrm{Vol}}}
    \font\cyr=wncyr10
    \newcommand{\Sha}{\hbox{\cyr X}}\newcommand{\wt}{\widetilde}
    \newcommand{\wh}{\widehat}
    \newcommand{\ov}{\overline}
    \newcommand{\sk}{\medskip}
    \newcommand{\lra}{\longrightarrow}
    \newcommand{\ra}{\rightarrow} 
    \newcommand{\bs}{\backslash}
    \newcommand{\s}{\sk\noindent}
    \theoremstyle{plain}
    \newtheorem{thm}{Theorem}[section] 
    \newtheorem{lem}[thm]{Lemma}  \newtheorem{prop}[thm]{Proposition}
\theoremstyle{remark} 
\theoremstyle{remark} 
\theoremstyle{remark} 
    \newcommand{\Neron}{N\'{e}ron~}
    \numberwithin{equation}{section}
\begin{document}

\title{On The Birch and Swinnerton-Dyer Conjecture for CM Elliptic Curves over $\BQ$}

\author{Yongxiong Li, Yu Liu and Ye Tian}

\dedicatory{\ \ \ \ \ \ \ \ \ \ \ \ \ \ \ \ \ \ \ \ \ \ \ \ \ \ \ \ \ \ \ \ \ \ \ \ \ \ \ \ \ \ \ \ \ \ \ \ \ \ \ \ \ \ \ \ \ \ \ \ \ \ \ \ \ \ \ \ \ \ \ \ \ \ \ \it To John Coates for his 70th birthday}

\maketitle

\tableofcontents

\section{Introduction and Main Theorems}
For an elliptic curve $E$ over a number field $F$, we write $L(s, E/F)$ for its complex $L$-function,  $E(F)$ for the Mordell-Weil group of $E$ over $F$, and $\Sha(E/F)$ for its Tate-Shafarevich group. For any prime $p$, let $\Sha(E/F)(p)$ or $\Sha(E/\BQ)[p^\infty]$ denote the $p$-primary part of $\Sha(E/F)$. When $F=\BQ$, we shall simply write $L(s,E)=L(s, E/\BQ).$
\begin{thm}\label{main thm} Let $E$ be an elliptic curve over $\BQ$ with complex multiplication. Let $p$ be any potentially good ordinary odd prime for $E$.
 \begin{enumerate}
 \item[(i)] Assume that  $L(s,E)$ has a simple zero at $s=1$. Then $E(\BQ)$ has rank one and $\Sha(E/\BQ)$ is finite. Moreover the order of $\Sha(E/\BQ)(p)$ is as predicted by the conjecture of  Birch and Swinnerton-Dyer conjecture.
\item[(ii)] If $E(\BQ)$ has rank one and $\Sha(E/\BQ)(p)$ is finite, then $L(E, s)$ has a simple zero at $s=1$.
\end{enumerate}
\end{thm}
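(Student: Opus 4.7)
The plan is to combine the Gross--Zagier/Kolyvagin machinery with Rubin's two-variable Iwasawa main conjecture for CM elliptic curves, exploiting the CM factorization $L(s,E) = L(s,\psi)L(s,\bar\psi)$, where $\psi$ is the algebraic Hecke character of the CM field $K$ attached to $E$. The hypothesis that $p$ is potentially good ordinary means that after a finite abelian base change $F/\BQ$, the curve $E_F$ has good ordinary reduction above $p$; equivalently $p$ splits in $K$ as $\fp\bar\fp$, so Katz's $p$-adic $L$-function $\CL_\fp$ is available as the main Iwasawa-theoretic input.

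For part (i), assuming $\ord_{s=1}L(s,E)=1$, I would first apply the Gross--Zagier formula in its CM-adapted form --- via Heegner points on a suitable Shimura curve together with the explicit Waldspurger/Rodriguez-Villegas formulas --- to produce a point $P\in E(\BQ)$ of infinite order. The Euler system of Heegner points (Kolyvagin), combined with Rubin's Euler system of elliptic units in the CM setting, then yields $\rank E(\BQ)\le 1$ and the finiteness of $\Sha(E/\BQ)$. For the $p$-part of BSD, I would invoke Rubin's main conjecture to equate the characteristic ideal of the Pontrjagin dual of the $p^\infty$-Selmer group with $\CL_\fp$ up to units, and then use a Perrin-Riou style explicit reciprocity comparing the $p$-adic height of the Heegner point to the first derivative of $\CL_\fp$; the exact formula follows after carefully pinning down all local factors.

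Part (ii) is the $p$-converse. The hypothesis that $E(\BQ)$ has rank one and $\Sha(E/\BQ)(p)$ is finite forces the $p^\infty$-Selmer group to have $\BZ_p$-corank exactly one. Rubin's main conjecture translates this into a simple zero of $\CL_\fp$ at the trivial character. Combining the interpolation formula for $\CL_\fp$ with the complex functional equation then yields $\ord_{s=1}L(s,E)=1$.

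The main obstacle is the \emph{potentially} good ordinary hypothesis: for primes of genuine good ordinary reduction the main conjecture and Perrin-Riou reciprocity are classical, but in the potentially good ordinary case one must descend these Iwasawa-theoretic statements from the extension $F$ over which $E$ acquires good ordinary reduction back down to $\BQ$. This involves tracking local Selmer conditions, Tamagawa factors at all bad primes, and the correction from the twist used to reach good ordinary reduction. Getting this descent right --- both for the exact $p$-part of BSD in (i) and for the converse direction in (ii) --- is what I expect to be the heart of the argument.
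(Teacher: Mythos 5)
Your high-level plan (Rubin's two-variable main conjecture plus a Gross--Zagier / $p$-adic Gross--Zagier input, with special care for the twist to good reduction) is in the right family, but it has two structural gaps relative to how the argument actually has to go.

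First, you never introduce the auxiliary imaginary quadratic field $\CK$ and the attendant factorization of $p$-adic $L$-functions, and this is where your phrase ``Perrin-Riou style explicit reciprocity comparing the $p$-adic height of the Heegner point to the first derivative of $\CL_\fp$'' breaks down. There is no direct $p$-adic Gross--Zagier formula for Katz's $p$-adic $L$-function: the Heegner point is constructed over an auxiliary field $\CK$ (chosen so that $p$ splits in $\CK$ and $L(s,E/\CK)$ still has a simple zero), and the available $p$-adic Gross--Zagier formula (Disegni, following Perrin-Riou/Kobayashi) relates the $p$-adic height of that point to the derivative of the Rankin--Selberg $p$-adic $L$-function $\msl_{E/\CK}$. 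One then needs the factorization $\msl_{E/\CK}=(\text{unit})\cdot\msl_\varphi\,\msl_{\varphi\eta_K}$ into two cyclotomic restrictions of Katz $L$-functions, together with the identity $\msl_{\varphi\eta_K}(1)\sim L(1,E^{(\CK)})/\Omega$, to pass from the Heegner/Rankin--Selberg side to the Iwasawa/Katz side. The paper makes this factorization a separate lemma, and proving it requires the interpolation formulas on both sides plus a local computation of epsilon factors. Without this bridge your proof has no way to connect Rubin's main conjecture (which controls $\Sha(E/K)$ via $\msl_\varphi'(1)$) to the Heegner-point height.

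Second, in part (ii) the step ``combining the interpolation formula for $\CL_\fp$ with the complex functional equation then yields $\ord_{s=1}L(s,E)=1$'' does not work. The interpolation range of Katz's $p$-adic $L$-function does not include the central point, so a $p$-adic simple zero does not convert to a complex simple zero by interpolation or by a functional equation argument. The actual route is: finiteness of $\Sha(E/K)(p)$ plus rank one plus non-degeneracy of the cyclotomic $p$-adic height (this is Bertrand's theorem, which you need to cite) forces $\msl_\varphi$ and $\msl_{\ov\varphi}$ to have exact order one at $s=1$; by the factorization, $\msl_{E/\CK}$ then has a simple zero; the $p$-adic Gross--Zagier formula then shows the Heegner point is non-torsion; and finally the \emph{complex} Gross--Zagier formula gives $L'(1,E/\CK)\neq 0$, hence $L'(1,E)\neq 0$. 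You have the right intuition that a converse theorem is needed, but the mechanism has to go through Heegner points, not through the functional equation.

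You do correctly identify the descent from $F$ (where $E$ becomes good ordinary) back to $\BQ$ as the new technical content of the potentially-good-ordinary case. In the paper this is two computations: a period/Gauss-sum comparison (via Stickelberger's theorem) showing $\Omega_E\cdot\tau(\varphi_\fp,\psi_p)/\Omega_0$ is a $p$-adic unit, and a careful evaluation of the $\Delta$-invariants of the local cohomology groups $H^1(\Gal(F(\mu_{p^\infty})/F),E(F(\mu_{p^\infty})\otimes_K K_v))$ to recover the Euler factors $(1-\varphi(v))(1-\ov{\varphi(v)})$. Your proposal gestures at ``Tamagawa factors and the correction from the twist'' but doesn't supply either of these computations, which really are the heart of the matter.
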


\s{\em Remark}. The first part of (i) is the result of Gross-Zagier and Kolyvagin. The remaining part is due to Perrin-Riou for good ordinary primes. In this paper, we deal with odd bad primes which are potentially good ordinary. The result can be easily generalized to  abelian varieties over $\BQ$ corresponding to a CM modular form with trivial central character.

The following theorem shows that there are infinitely many elliptic curves over $\BQ$ of rank one for which the full BSD conjecture hold.
\begin{thm}\label{congruent} Let $n\equiv 5\mod 8$ be a squarefree positive integer, all of whose primes factors are congruent to $1$ modulo $4$. Assume that $\BQ(\sqrt{-n})$ has no ideal class of order $4$. Then the full BSD conjecture holds for the elliptic curve  $y^2=x^3-n^2 x$ over $\BQ$. In particular, for any prime $p\equiv 5\mod 8$, the full BSD holds for $y^2=x^3-p^2x$. 
\end{thm}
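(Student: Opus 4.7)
The plan is to combine three ingredients: Tian's explicit Heegner-point construction to obtain algebraic and analytic rank both equal to one, Theorem \ref{main thm}(i) of the present paper to handle odd potentially-good-ordinary primes, and additional input to cover the remaining primes. First, since $n \equiv 5 \pmod 8$, the global root number of $E^{(n)}$ is $-1$, so $\ord_{s=1} L(s, E^{(n)})$ is odd. Following Tian, I would construct a Heegner point $P_n \in E^{(n)}(\BQ)$ using CM points on a suitable Shimura curve attached to $\BQ(\sqrt{-n})$. The hypotheses---all prime factors of $n$ are $\equiv 1 \pmod 4$ and $\BQ(\sqrt{-n})$ has no ideal class of order $4$---are precisely what makes a R\'edei-style induction on the number of prime factors of $n$ show that $P_n \notin 2 E^{(n)}(\BQ)$. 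Hence $P_n$ has infinite order; Gross--Zagier and Kolyvagin then yield $\rank E^{(n)}(\BQ) = 1 = \ord_{s=1} L(s, E^{(n)})$ and $\Sha(E^{(n)}/\BQ) < \infty$.

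It remains to verify the $p$-part of the BSD formula for each prime $p$. Because $E^{(n)}$ has CM by $\BZ[i]$, an odd prime $p$ is potentially good ordinary exactly when $p$ splits in $\BZ[i]$, i.e.\ $p \equiv 1 \pmod 4$, and potentially supersingular when $p \equiv 3 \pmod 4$. For the former, Theorem \ref{main thm}(i) supplies the $p$-part of BSD directly. For the latter, I would invoke Rubin's two-variable main conjecture for the CM elliptic curve $E^{(n)}$---known unconditionally in this setting---together with the Perrin-Riou/Kobayashi formalism at supersingular primes, to deduce the $p$-part of BSD.

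The $2$-part is the most delicate and constitutes the main obstacle. I would carry out an explicit descent via the $2$-isogeny $\phi: E^{(n)} \to E^{(n)}/\langle (0,0)\rangle$ and its dual $\hat\phi$. The hypothesis on the $4$-class group of $\BQ(\sqrt{-n})$ is precisely the Heath-Brown/Monsky-type condition pinning down the $\BF_2$-dimensions of $\Sel^{(\phi)}$ and $\Sel^{(\hat\phi)}$; combined with algebraic rank $1$ from the first step, this forces $\Sha(E^{(n)}/\BQ)[2^\infty] = 0$. One then matches this vanishing against explicit Tamagawa numbers (controlled by the primes dividing $2n$), the torsion subgroup $E^{(n)}(\BQ)_{\tor} \cong (\BZ/2\BZ)^2$, and the real period $\Omega_{E^{(n)}}$ in order to verify the $2$-adic BSD formula. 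The key difficulty is organizing this $2$-descent as an induction on the number of prime factors of $n$, in the style of Heath-Brown and Monsky, so that all these $2$-adic contributions line up precisely with the prediction of BSD; the $4$-class group hypothesis is engineered exactly to make this alignment work.
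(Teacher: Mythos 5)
Your proposal follows essentially the same path as the paper's proof: cite Tian and Tian--Yuan--Zhang for the non-torsion Heegner point (hence both ranks are $1$ and $\Sha$ is finite), appeal to Perrin-Riou/Kobayashi at good ordinary/supersingular primes, use Theorem~\ref{main thm}(i) for the bad odd primes (all of which are $\equiv 1 \bmod 4$, hence potentially good ordinary since $E^{(n)}$ has CM by $\BZ[i]$), and handle $p=2$ by an explicit $2$-descent keyed to the $4$-class-group hypothesis. The only real difference is cosmetic: the paper covers the good odd primes $p\nmid 2n$ with Perrin-Riou and Kobayashi directly, and reserves Theorem~\ref{main thm} for $p\mid n$, whereas you use Theorem~\ref{main thm} for all odd $p\equiv 1 \bmod 4$; both are valid.

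The one substantive soft spot is your account of the $2$-part. Knowing $\Sha(E^{(n)}/\BQ)[2^\infty]=0$ and having the $2$-isogeny descent control the Selmer dimensions only produces the \emph{algebraic} side of the $2$-adic BSD formula. To verify equality you must also compute $\ord_2\bigl(L'(E^{(n)},1)/(\Omega_{E^{(n)}} R_\infty)\bigr)$, and this cannot be extracted from the descent, the Tamagawa numbers, the torsion, or the period alone. The paper pinpoints the missing ingredient: the \emph{explicit} Gross--Zagier formula of \cite{CST}, whose precise local constants let one convert $L'(E^{(n)},1)$ into $\widehat h_\infty(P_n)$ with $2$-adic precision. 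Combined with the descent fact that $P_n\notin 2E^{(n)}(\BQ)$ (so the Heegner point generates $E^{(n)}(\BQ)/\mathrm{tors}$ up to odd index), this is what forces the analytic side to line up $2$-adically with the algebraic side. Your phrase ``matches this vanishing against Tamagawa numbers, torsion, and the period'' silently presupposes this identity; without naming and invoking the explicit Gross--Zagier constant, the argument for the $2$-part is incomplete.
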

\begin{proof}By \cite{Tian} and \cite{TYZ}, the Heegner points constructed using the Gross-Prasad test vector is non-torsion. Thus both the analytic rank and Mordell-Weil rank of $E^{(n)}: y^2=x^3-n^2 x$ are one. By Perrion-Riou \cite{Perrin-Riou} and Kobayashi \cite{KO}, we know that the $p$-part of full BSD holds for all primes $p\nmid 2n$. Using the explicit Gross-Zagier formula in \cite{CST}, one can also check that the $2$-part of BSD holds by noting that $\dim_{\BF_2} \Sel_2(E^{(n)}/\BQ)/Im(E^{(n)}(\BQ)_\tor)=1$. By Theorem \ref{main thm}, the $p$-part of BSD also holds for all primes $p|n$,  since all primes $p$ with $p\equiv 1\mod 4$ are potentially good ordinary primes for $E^{(n)}.$
\end{proof}

Let $E$ be an elliptic curve defined over $\BQ$,  with complex multiplication (= CM in what follows) by an imaginary quadratic field $K$.  Let $p\neq 2$ be a potential good ordinary prime for $E$.  Note that $p$ must split in $K$,  and  also $p$ does not divide the number $w_K$ of roots of unity in $K$.

Assume that $L(s, E)$ has a simple zero at $s=1$. Choose an auxiliary imaginary quadratic field $\CK$ such that (i) $p$ is split over $\CK$ and (ii) $L(s, E/\CK)$ still has a simple zero at $s=1$. Let $E^{(\CK)}$ be the twist of $E$ over $\CK$, then $L(1, E^{(\CK)})\neq 0$. Let $\eta$ be the quadratic character associated to the extension $\CK/\BQ$ and $\eta_K$ its restriction to $K$. Let $\BQ_\infty$ be the cyclotomic $\BZ_p$-extension of $\BQ$,  and put $\Gamma=\Gal(\BQ_\infty/\BQ)$. For any finite order character $\nu$ of $\Gamma$, let $\nu_\CK$ denote its restriction to $\Gal(K\BQ_\infty/K)$. Consider the equality
$$L(s, E\otimes \nu) L(s, E^{(\CK)}\otimes \nu)=L(s, E_\CK\otimes \nu_\CK)$$
and its specialization to $s=1$.  Let $\msl_\varphi, \msl_{\varphi\eta_K}$ be the cyclotomic-line restrictions of the two Katz's two variable $p$-adic L-fucntion corresponding to $E$ and $E^{(\CK)}$, respectively.   Let $\msl_{E/\CK}$ be the cyclotomic-line restriction of the $p$-adic Rankin-Selberg L-function for $E$ over $\CK$.  The ingredients needed to prove the $p$-part BSD formula of $E$ are the following.
 \begin{enumerate}
 \item Rubin's two variable main conjecture\cite{rubin91} in order to relate the $p$-part of $\Sha(E/K)$ with $\msl_\varphi'(1)$. Note that $\ord_p(|\Sha(E/K)|)=2\ord_p(|\Sha(E/\BQ)|)$ for odd $p$.
 \item The complex Gross-Zagier formula \cite{YZZ} and the $p$-adic Gross-Zagier formula \cite{DD}, which relate
$\msl_{E/\CK}'(1)$ and $L'(1, E/\CK)=L'(1, E/\BQ)L(1, E^{(\CK)}/\BQ)$.
\item The precise relationship between $\msl_\varphi'(1)\msl{\varphi\eta_K}(1)$ and $\msl_{E/\CK}'(1)$,  and also between $\msl_{\varphi\eta_K}(1)$ and $L(1, E^{(\CK)})$.  This follows from the above equality of L-series and the interpolation properties of these $p$-adic L-fucntions.
\end{enumerate}
   Suppose that $E$ has bad reduction at $p$ which is potential good for $E$. Let $\fp$ denote a prime of $K$ above $p$. There is an elliptic curve $E'$ over $K$ with good reduction at $\fp$. In the process of proof, we need to compare periods, descends etc between $E$ and $E'$.

\s{\bf Notations}. Fix a non-trivial additive character $\psi: \BQ_p\lra \BC_p^\times$ with conductor $\BZ_p$.  For any character $\chi: \BQ_p^\times \ra \BC_p^\times$, say of conductor $p^n$ with $n\geq 0$,  we define the root number by
$$\tau(\chi, \psi)=p^{-n} \int_{v_p(t)=-n} \chi^{-1}(t)\psi(t) dt,$$
where $dt$ is the Haar measure on $\BQ_p$ such that $\Vol(\BZ_p, dt)=1$.
Fix embeddings $\iota_\infty: \ov{\BQ}\hookrightarrow \BC$ and $i_p: \ov{\BQ}\hookrightarrow \BC_p$ such that $\iota_p=\iota \circ \iota_\infty$ for an isomorphism $\iota: \BC\stackrel{\sim}{\ra} \BC_p$. For an elliptic curve $E$ over a number field $F$ and $p$ a potential good prime for $E$, let $(\ , )_\infty$ and $(\ ,\ )_p$ denote the normalized \Neron-Tate height pairing, and the $p$-adic height pairing with respect to cyclotomic character. Let $P_1, \cdots, P_r\in E(F)$ form a basis for $E(F)\otimes_\BZ \BQ$, define the regulars by
$$R_\infty(E/F)=\frac{\det\left((P_i, P_j)_\infty\right)_{r\times r}}{[E(K): \sum_i \BZ P_i]^2}, \qquad R_p(E/F)=\frac{\det\left((P_i, P_j)_p\right)_{r\times r}}{[E(K): \sum_i \BZ P_i]^2}.$$
For any character $\chi$ of $\wh{K}^\times$, let $\ff_\chi\subset \CO_K$ denote its conductor. For an elliptic curve $E$ over $K$, let $\ff_E$ denote its conductor.  For any non-zero integral ideals $\fg$ and $\fa$ of $K$, let $\fg^{(\fa)}$ denote the prime-to-$\fa$ part of $\fg$. Let $\BD$ be the completion of the maximal unramified extension of $\BZ_p$ and $\BD_\chi$ the finite extension of $\BD$ generated by the values of $\chi$. Let $L_\infty/K$ be an abelian extension whose Galois group $\CG=\Gal(L_\infty/K)\cong \Delta \times \Gamma$ with $\Delta$ finite and $\Gamma\cong \BZ_p^d$. Then for any $\BD[[\CG]]$-module $M$ and character $\chi$ of $\Delta$, put $M_\chi=M\otimes_{\BD[[\CG]], \chi} \BD_\chi[[\Gamma]]$. If $p\nmid |\Delta|$, let $M^\chi$ denote its $\chi$-component (as a direct summand).

\s{\em Acknowledgment.} The authors thank John Coates, Henri Darmon and Shouwu Zhang for their encouragement.

\section{Katz's $p$-adic L-function and Cyclotomic $p$-adic Formula}
Let $E$ be an elliptic curve defined over $K$ with CM by $K$ and $\varphi$ its associated Hecke character.  Let $p\nmid w_K$ be a prime split in $K$ and $p\CO_K=\fp\fp^*$ with $\fp$ induced by $\iota_p$. In particular, $K_\fp=\BQ_p$ in $\BC_p$ and let $\psi_\fp=\psi_p$ on $K_\fp$ under this identification.  Let $\Omega_E$ be a $\fp$-minimal period of $E$ over $K$. Let $\varphi$ be the associated Hecke character of $E$ and $\varphi_\fp$ its $\fp$-component. Let $\ff_E$ be the conductor of $\varphi$.

Let $F$ be an abelian extension over $K$ with Galois group $\Delta$. Assume that $p\nmid |\Delta|$ and denote by $\ff_{F/K}$ the conductor of $F$.  Let $\CG$ be the Galois group of the extension $F(E[p^\infty])$ over $K$. Then $\CG\cong \CG_\tor \times \Gamma_K$ with $\Gamma_K=\Gal(F(E[p^\infty])/F(E[p]))$. Let $\Lambda=\BZ_p[[\CG]]$.  Let $U_\infty$ and $C_\infty$ denote the $\BZ_p[[\CG]]$-modules formed from the principal local units at the primes above $\fp$, and the closure of the elliptic units for $K(E[p^\infty])$ (see \S4 of \cite{rubin91}  for the precise definitions.)

\begin{thm}[Two variable $p$-adic L-function] \label{measure2} Let $\fg$ be any prime-to-$p$ non-zero integral ideal of $K$. Assume that $\ff_E^{(p)}|\fg$.   There exists a unique measure $\mu_\fg=\mu_{\fg, \fp}$ on the group $\CG=\Gal(K(\fg p^\infty)/K)$ such that for any character $\rho$ of $\CG$ of type $(1, 0)$,
$$\rho(\mu_\fg)=\frac{\tau(\rho_\fp, \psi_\fp)}{\tau(\varphi_\fp, \psi_\fp)}\cdot \frac{1-\rho (\fp)p^{-1}}{1-\ov{\rho(\fp)}p^{-1}}\cdot \frac{L^{(\fg p)}(\ov{\rho}, 1)}{\Omega_E}.$$
Here $L^{(\fg p)}(\ov{\rho}, s)$ is the imprimitive L-series of $\ov{\rho}$ with Euler factors at places dividing $\ff p$-removed.
\end{thm}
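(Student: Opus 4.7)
\medskip

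The plan is to construct $\mu_\fg$ via Katz's $p$-adic Eisenstein measure evaluated at the CM point attached to $E$, and then verify the interpolation formula by unpacking the interpolation property of Eisenstein measures at Hecke characters of infinity type $(1,0)$. Uniqueness is immediate once existence is established, because Hecke characters of type $(1,0)$ which are trivial on some fixed open subgroup of $\CG$ are Zariski dense in the character variety of $\CG$, so two measures with the same evaluations on all such characters must coincide.

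For the construction, I would work over the Igusa tower above the ordinary locus of the moduli space of elliptic curves with $\Gamma_1(\fg)$-level structure. Katz's $p$-adic Eisenstein measure $\mathrm{Eis}_\fg$ is a measure on $\CG = \Gal(K(\fg p^\infty)/K)$ with values in the ring of $p$-adic modular forms of tame level $\fg$ (extended by $\CO_{\BC_p}$), characterized by an interpolation formula whose archimedean avatar recovers classical Eisenstein series attached to algebraic Hecke characters. Given the CM elliptic curve $(E,\omega)$ over $K$ with a chosen $\fp$-minimal differential $\omega$ (so that $\Omega_E$ is the associated complex period), evaluation of a $p$-adic modular form $f$ at the CM point $(E,\omega)$ yields an element of $\CO_{\BC_p}$; applying this evaluation to $\mathrm{Eis}_\fg$ produces a measure on $\CG$, which I take to be $\mu_\fg$. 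The hypothesis $\ff_E^{(p)}\mid \fg$ ensures that the tame level is compatible with the conductor of $\varphi$ so that no Euler factor at a place dividing $\ff_E^{(p)}$ is lost.

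To verify the interpolation formula, I would fix a character $\rho$ of $\CG$ of infinity type $(1,0)$ and use Katz's formula to identify $\rho(\mathrm{Eis}_\fg)$, evaluated at $(E,\omega)$, with a special value of an Eisenstein series; this special value, by a standard unfolding, equals an incomplete $L$-value $L^{(\fg p)}(\bar\rho,1)$ multiplied by a period ratio $\Omega_E^{-1}$ and by the two local modifications shown in the statement. The factor $(1-\rho(\fp)p^{-1})(1-\overline{\rho(\fp)}p^{-1})^{-1}$ arises as the ratio of the missing Euler factor at $\fp$ (which has to be stripped since we are interpolating $p$-adically on the ordinary side) to the modified Euler factor on the conjugate side, exactly as in Katz's original computation. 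The ratio of root numbers $\tau(\rho_\fp,\psi_\fp)/\tau(\varphi_\fp,\psi_\fp)$ comes from the comparison of the local archimedean/non-archimedean normalizations of the CM period: when $\rho$ has the same infinity type as $\varphi$, only the finite parts of $\rho$ and $\varphi$ differ at $\fp$, and this difference is recorded precisely by that quotient of Gauss sums.

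The main obstacle in writing this out carefully is the bookkeeping of normalizations: matching the additive character $\psi_\fp$ to the trivialization of the formal group of $E$ at $\fp$ determined by $\omega$, so that the local Gauss sum $\tau(\varphi_\fp,\psi_\fp)$ appears in the denominator with the correct sign, and ensuring that the period $\Omega_E$ attached to the $\fp$-minimal differential matches the one implicit in the choice of trivialization of $\underline{\omega}$ on the Igusa tower. Once these choices are aligned and the Euler factors at places dividing $\fg$ are correctly excised, the interpolation formula of the statement is an instance of Katz's theorem (and is treated in this precise normalization in \S4 of \cite{rubin91}), and uniqueness finishes the proof.
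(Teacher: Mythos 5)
Your proposal follows the general Katz-style construction (the $p$-adic Eisenstein measure on the Igusa tower, evaluated at a CM point), which is indeed the engine behind the theorem, and your uniqueness argument via density of type $(1,0)$ characters is fine. However, there is a genuine gap: your construction implicitly assumes that $E$ has good ordinary reduction at $\fp$. Evaluating the Eisenstein measure at $(E,\omega)$ over the Igusa tower requires $(E,\omega)$ to define a point of the ordinary locus, which is exactly the hypothesis of good ordinary reduction. But the theorem is stated for an arbitrary CM elliptic curve $E/K$ with $\Omega_E$ a $\fp$-minimal period, and the case of actual interest in this paper --- the novelty this section is aiming at --- is when $E$ has bad (potentially good) reduction at $\fp$, where the CM point lies outside the ordinary locus and your construction does not apply as written.

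The paper resolves this through Lemma \ref{period}: one runs Katz's construction with a period $\Omega_0$ attached to an auxiliary Hecke character $\varphi_0$ unramified at $\fp$ (equivalently a good-reduction twist $E'$ of $E$), and then Lemma \ref{period} --- whose proof is a nontrivial application of Stickelberger's theorem on the prime factorization of Gauss sums --- shows that $\Omega_E \cdot \tau(\varphi_\fp,\psi_\fp)/\Omega_0$ is a $p$-adic unit, which lets one rewrite Katz's interpolation formula with $\Omega_E$ in place of $\Omega_0$ at the cost of inserting $\tau(\varphi_\fp,\psi_\fp)$ into the denominator. This is precisely where that denominator Gauss sum comes from; your heuristic that it records a ``comparison of the local archimedean/non-archimedean normalizations of the CM period'' is too vague and does not identify the correct mechanism. (By contrast, the numerator $\tau(\rho_\fp,\psi_\fp)$ is the standard Gauss sum from Katz/de Shalit and your description of it and of the Euler factor ratio is fine.) Your closing claim that ``the interpolation formula of the statement \dots\ is treated in this precise normalization in \S4 of \cite{rubin91}'' is not accurate: Rubin works with the good-reduction period, and the renormalization to $\Omega_E$ with the extra Gauss sum is an additional step that is the content of Lemma \ref{period} and is not already in the references. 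Inserting the good-reduction twist and period-comparison step would close the gap.
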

\begin{proof}
It follows from the below lemma \ref{period} and  construction of Katz's two variable $p$-adic measure, see Theorem 4.14.
\end{proof}
\begin{thm}[Yager] For any character $\chi$ of $\CG_\tor$, let $\ff=\ff_\chi^{(p)}$ and $\mu_\ff^\chi:=\chi(\mu_\ff)\in \BD[[\Gamma_K]]$. Then we have
$$\Char(U_\infty/C_\infty)_\chi\cdot
\BD[[\Gamma_K]]=\left(\mu_{\ff}^\chi\right).$$
Here the measure $\mu_\ff$ is defined as in Theorem \ref{measure2}.
\end{thm}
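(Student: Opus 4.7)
The plan is to use the Coleman--Yager map $\delta: U_\infty \to \BD[[\CG]]$, constructed via logarithmic derivatives along the Lubin--Tate tower attached to $\varphi_\fp$ over $K_\fp = \BQ_p$. This map, which extends the classical Coates--Wiles logarithmic derivatives, sends the norm-coherent system of elliptic units to a measure whose interpolation property matches the one characterizing $\mu_\fg$ in Theorem \ref{measure2}. The proof then reduces to a structural description of $(U_\infty)_\chi$ as a $\BD[[\Gamma_K]]$-module.

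First, I would recall the construction of $\delta$. Given $u = (u_n) \in U_\infty$, Coleman's theorem produces a power series $f_u \in \BZ_p[[T]]^\times$ interpolating the $u_n$ along the Lubin--Tate tower for $\varphi_\fp$. Logarithmic differentiation $\lambda(T)^{-1}\, d\log f_u / dT$ followed by evaluation at torsion points, organized via the Amice--Mahler transform, defines a $\BD$-valued measure $\delta(u)$ on $\CG$ whose $\rho$-moments compute the $\rho$-twisted Coates--Wiles derivatives. The norm-compatibility of $u$ is exactly what furnishes the distribution relations required to descend to a measure on $\CG$.

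Second, I would compute $\delta$ on the Robert elliptic unit $\beta_\ff$ of conductor $\ff$, which (after projection) generates $(C_\infty)_\chi$. Using the Kronecker limit formula together with the expression of elliptic units as values of theta functions at CM points, one verifies that for every type-$(1,0)$ character $\rho$ of $\CG$ with $\rho|_{\CG_\tor} = \chi$,
$$\rho\bigl(\delta(\beta_\ff)\bigr) = \frac{\tau(\rho_\fp, \psi_\fp)}{\tau(\varphi_\fp, \psi_\fp)} \cdot \frac{1 - \rho(\fp) p^{-1}}{1 - \ov{\rho(\fp)} p^{-1}} \cdot \frac{L^{(\ff p)}(\ov\rho, 1)}{\Omega_E}.$$
By the uniqueness in Theorem \ref{measure2}, this forces $\chi(\delta(\beta_\ff)) = \mu_\ff^\chi$ in $\BD[[\Gamma_K]]$.

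Third, I would pass to $\chi$-components. Since $p \nmid |\Delta|$ and $p$ splits in $K$, classical results on local units in one-dimensional Lubin--Tate towers (Iwasawa, refined by Coates--Wiles and de Shalit) give that $(U_\infty)_\chi$ is a free $\BD[[\Gamma_K]]$-module of rank one, and that $\delta_\chi: (U_\infty)_\chi \to \BD[[\Gamma_K]]$ is injective with pseudo-null cokernel. Combining this identification with the calculation of the previous step, $(C_\infty)_\chi$ corresponds under $\delta_\chi$ to the ideal $(\mu_\ff^\chi)$, and therefore
$$\Char\bigl((U_\infty / C_\infty)_\chi\bigr) \cdot \BD[[\Gamma_K]] = (\mu_\ff^\chi).$$

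The main obstacle is the third step: the characteristic-ideal calculation demands tight control on the cokernel of $\delta_\chi$. This rests on the detailed Iwasawa-theoretic structure of local units in the Lubin--Tate tower, and one must handle with care the case where the local restriction of $\chi$ at $\fp$ is unramified, in which an Euler factor at $\fp$ enters both $(U_\infty/C_\infty)_\chi$ and $\mu_\ff^\chi$ in compatible ways so that no spurious factor survives in the quotient.
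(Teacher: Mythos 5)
The paper offers no proof of this statement: it is quoted as Yager's theorem, and the intended reference is \cite{Yager} together with its extension to a general abelian $F/K$ in Rubin \cite{rubin91} \S 7 and de Shalit \cite{deshalit}. Your blind reconstruction nevertheless identifies Yager's actual strategy correctly. The three pillars are exactly as you list them: the Coleman/Yager homomorphism $\delta$ from local units to $\BD$-valued measures on $\CG$; the explicit calculation of $\delta$ on the norm-coherent system of Robert elliptic units via Kronecker's limit formula, which is where the factors $\tau(\rho_\fp,\psi_\fp)/\tau(\varphi_\fp,\psi_\fp)$, the Euler factor at $\fp$, and the period $\Omega_E$ arise and are matched against the interpolation formula of Theorem~\ref{measure2}; and the structure theorem identifying $(U_\infty)_\chi$ as free of rank one over $\BD[[\Gamma_K]]$ with $\delta_\chi$ inducing an equality of characteristic ideals.

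One caution on your third step, which you rightly flag as the crux. The tower $F_\infty/F$ completed at $\fp$ is two-dimensional: a ramified Lubin--Tate $\BZ_p$-direction coming from $E[\fp^\infty]$ and an unramified $\BZ_p$-direction coming from $E[\fp^{*\infty}]$, so that $\BD[[\Gamma_K]]$ is a power series ring in two variables. The freeness of $(U_\infty)_\chi$ and the control of $\mathrm{coker}(\delta_\chi)$ in this two-variable setting is precisely the substantial part of Yager's paper (his Theorems~1--3) and does not reduce directly to the one-dimensional Coates--Wiles/Iwasawa picture; it must be built fibrewise over the unramified direction by a projective-limit argument. Your closing remark about the Euler factor when $\chi_\fp$ is unramified is pointing at exactly the right bookkeeping in that analysis, so your sketch is a faithful outline, with the genuine technical content deferred to the cited literature.
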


\begin{lem}\label{period} Let $E/K$ be an elliptic curve associated with to a Hecke character $\varphi$,  $p$ splits in $K$ and write $p\CO_K=\fp\fp^*$.  Let $\varphi_0$ be a Hecke character over $K$ unramified at $\fp$.  Let $\Omega_E$ and $\Omega_0$ be  $\fp$-minimal periods of $E$ and $\varphi_0$, respectively.  Then
$$\ord_p\left(\frac{\Omega_E\cdot \tau(\varphi_\fp, \psi_p)}{\Omega_0}\right)=0.$$
\end{lem}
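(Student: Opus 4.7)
The plan is to exploit that both $\varphi$ and $\varphi_0$ are Hecke characters of $K$ of infinity type $(1,0)$, so their quotient $\chi:=\varphi\varphi_0^{-1}$ is a finite-order Hecke character of $K$; the lemma then amounts to tracking how the $\fp$-minimal period transforms under the finite-order twist $\chi$ in terms of the local $\varepsilon$-factor at $\fp$.

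The first step is a reduction to a statement involving only $\chi$. Since $\varphi_0$ is unramified at $\fp$, the conductor exponents of $\chi_\fp$ and $\varphi_\fp$ agree. A direct computation from the definition of $\tau(\cdot,\psi_p)$ shows that $\tau(\varphi_\fp,\psi_p)$ and $\tau(\chi_\fp,\psi_p)$ differ only by a value of the unramified character $\varphi_{0,\fp}$ at a uniformizer raised to the common conductor exponent, hence by a $p$-adic unit. It therefore suffices to establish $\ord_p\bigl(\Omega_E\,\tau(\chi_\fp,\psi_p)/\Omega_0\bigr)=0$.

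Next I would choose a geometric realization of $\varphi_0$. By the Shimura--Taniyama construction there exists a CM abelian variety $A_0/K$ whose associated Hecke character is $\varphi_0$; because $\varphi_0$ is unramified at $\fp$, the variety $A_0$ has good reduction at $\fp$, and $\Omega_0$ coincides, up to $p$-adic units, with the period of a Néron differential of $A_0$ at $\fp$. The CM motive attached to $\varphi$ is isomorphic to the $\chi$-twist of that of $\varphi_0$, so $E$ becomes isogenous, over the field cut out by $\chi$, to a factor of the Weil restriction of $A_0^\chi$. Pulling the Néron differential of $E/K_\fp$ back along such an isogeny lands it in the $\chi$-eigenspace of $H^0_\dR(A_0)\otimes K(\chi)$, and the identification to prove is that the ratio of the $\fp$-integral lattices on the two sides is generated by $\tau(\chi_\fp,\psi_p)$ up to $p$-adic units.

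The main obstacle is upgrading the classical twisted-period formula, which in Deligne's formalism gives the period relation only up to elements of $\ov\BQ^\times$, to this sharp $\fp$-adic valuation statement. I would carry this out by invoking Katz's explicit construction of the two-variable $p$-adic $L$-function: the canonical differential is defined using the trivialization of the ordinary $p$-divisible group at $\fp$, and the Gauss-sum factor $\tau(\varphi_\fp,\psi_p)$ appearing in the interpolation formula of Theorem~\ref{measure2} is exactly the renormalization converting the good-reduction datum for $A_0$ into the $\fp$-minimal datum for $E$. Comparing the interpolating measures constructed from $\Omega_E$ and from $\Omega_0$ on a common family of characters of type $(1,0)$ then forces the claimed equality of $\fp$-adic valuations by the uniqueness part of Theorem~\ref{measure2}.
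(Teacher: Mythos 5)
Your opening reduction is fine: since $\varphi_0$ is unramified at $\fp$, the characters $\varphi_\fp$ and $\chi_\fp:=(\varphi\varphi_0^{-1})_\fp$ have the same conductor, and the two Gauss sums differ by a value of the unramified character $\varphi_{0,\fp}$, hence by a $p$-adic unit; this part matches what the paper does implicitly. You also correctly isolate the real difficulty, namely that Deligne-style period relations for CM motives only control the ratio $\Omega_E/\Omega_0$ up to $\ov\BQ^\times$, and that what is wanted here is a sharp statement of $\fp$-adic valuation. The problem is in how you propose to close that gap.

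Your final step invokes Theorem~\ref{measure2} --- in particular the form of its interpolation factor $\tau(\varphi_\fp,\psi_\fp)$ and the uniqueness of the resulting measure --- to pin down the $\fp$-valuation of $\Omega_E\tau(\varphi_\fp,\psi_p)/\Omega_0$. But in the paper the logical order is the reverse: the proof of Theorem~\ref{measure2} cites Lemma~\ref{period} as its input. The Gauss-sum normalization in the interpolation law is not something one can read off from Katz's construction and then compare; it is precisely the assertion that multiplying the $\fp$-minimal period of $E$ by this Gauss sum recovers, up to $p$-adic units, the unramified period used in the classical construction. That assertion \emph{is} the lemma, so appealing to the interpolation formula and ``the uniqueness part of Theorem~\ref{measure2}'' to derive it is circular. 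Uniqueness of a $p$-adic measure determines its values given a specified interpolation law; it does not tell you that two differently normalized candidate laws produce $p$-adically commensurable measures.

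What is actually missing is the computational content the paper supplies: writing a $\fp$-minimal Weierstrass model $y^2=x^3+a_2x^2+a_4x+a_6$ for $E$, twisting by a character $\epsilon$ of the form $\sigma\mapsto\sigma(d^{1/w})/d^{1/w}$ to obtain a $\fp$-good model $E'$, observing the explicit period transformation $\Omega_{E'}=d^{1/w}\Omega_E$, and then using \emph{Stickelberger's theorem} to compute $\ord_\fp$ of the Gauss sum $g(\epsilon_\fp,\psi)$ as $\{k/w\}$ when $\epsilon_\fp=\omega^{-k(p-1)/w}$, together with a local class field theory verification (via the formal group of $x^p-\pi x$) that $k=\ord_\fp(d)$. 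This is what cancels the $\fp$-power in $d^{1/w}$ exactly. Without Stickelberger, or some equivalent arithmetic input on the $\fp$-adic valuation of Gauss sums attached to tame characters, the valuation statement cannot be extracted from period-comparison or measure-uniqueness considerations alone.
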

\begin{proof}
This follows from Stickelberger's theorem on prime ideal decomposition of Gauss sum. In fact, for $\fp\nmid w=w_K$, $E$ has $\fp$-minimal Weierstrass equation of form
$$E: y^2=x^3+a_2 x^2+a_4 x+a_6, \qquad a_2, a_4, a_6\in K^\times \cap \CO_\fp.$$
Note that for $w=4, 6$,  we may-and do- take form $y^2=x^3+a_4x$,  $y^2=x^3+a_6$, respectively.  Then there is an elliptic curve $E'$ over $K$ which has good reduction at $\fp$. Let $\varphi'$ be its associated Hecke character. Then $\epsilon=\varphi{\varphi'}^{-1}: \BA_K^\times/K^\times \ra \CO_K^\times$ (also viewed as a Galois character via class field theory) is of form $\chi(\sigma)=\sigma(d^{1/w})/d^{1/w}$ for an element $d\in K^\times/K^{\times w}$. Then the twist $E'$ has $\fp$-good model
$$E': \begin{cases} y^2=x^3+d a_2 x^2+d^2 a_4 x+ d^3 a_6, \qquad &\text{if $w=2$},\\
y^2=x^3+da_4 x, &\text{if $w=4$},\\
y^2=x^3+da_6, &\text{if $w=6$}.
\end{cases}$$
It is easy to check the $\Omega_{E_0}=d^{1/w}\cdot \Omega_E$. Let $\omega: \CO_\fp^\times \lra \mu_w\subset K$ be the character characterized by $\omega(a)\equiv a\mod \fp$ and let $\chi=\omega^{-(p-1)/w}$. Then $\epsilon_\fp=\chi^k$ for some $k\in \BZ/w\BZ$. Let $\kappa_\fp\cong \BF_p$ be the residue field of $K_\fp$. By Stickelberger's theorem, the Gauss sum $g(\epsilon_\fp, \psi):=-\sum_{a\in \kappa_\fp^\times} \epsilon_\fp(a) \psi(a)$ has $\fp$-valuation $\{k/w\}$. It remains to show that $k=\ord_\fp(d)$. Note that for any $u\in \CO_\fp^\times$, $K_\fp(u^{1/w})$ is unramified over $K_\fp$. Thus it is equivalent to show that for any uniformizer $\pi$ of $K_\fp$,
$$\sigma_u(\pi^{1/w})/\pi^{1/w}\equiv u^{-(p-1)/w}\mod \fp, \qquad \forall u\in \CO_\fp^\times.$$
But it is easy to see this by using local class field theory for formal group associated to $x^p-\pi x$.

For general Hecke character $\varphi_0$ over $K$ unramified at $\fp$ (not necessarily $K$-valued) and $\Omega_0$ its $\fp$-minimal period, it is easy to see that $\ord_\fp(\Omega_0/\Omega_{E_0})=0$.

\end{proof}
Let $\chi_{\cyc, K}: \CG \ra \BZ_p^\times$ be the $p$-adic cyclotomic character defined by the action on $p$-th power roots of unity.  Define
$$\msl_{\varphi_E}(s):=\mu_{\ff_E^{(\fp)}}(\varphi_E \chi_{\cyc, K}^{1-s}), \qquad \forall s\in \BZ_p.$$
Rubin's two variable main conjecture implies the following theorem.

\begin{thm}Let $E$ be an elliptic curve defined over $K$ with CM by $K$ and $\varphi$ its associated Hecke character.
Let $p\nmid w_K$ be a prime split in $K$ and $p\CO_K=\fp\fp^*$.  Let $r$ be the $\CO_K$-rank of $E(K)$. Assume that $\Sha(E/K)(p)$ is finite and the $p$-adic height pairing of $E$ over $K$ is non-degenerate. Then
\begin{enumerate}
\item both $\msl_\varphi(s)$ and $\msl_{\ov{\varphi}}(s)$ have a zero at $s=1$ of exact order $r$.
\item  the $p$-adic BSD conjecture holds for $E/K$:
$$\ord_p(|\Sha(E/K)|)=\ord_p\left(\frac{\msl_\varphi^{(r)}(1) \msl_{\ov{\varphi}}^{(r)}(1)}{R_p(E/K)}\cdot \prod_{v\mid p}\left((1-\varphi_E(v))\left(1-\ov{\varphi_E(v}\right)\right)^{-2}\right)$$
provided the assumption that if $w_K=4$ or $6$ then $E$ has bad reduction at both $\fp$ and $\fp^*$ or good reduction at both $\fp$ and $\fp^*$.
\end{enumerate}
Moreover, if $E$ is defined over $\BQ$, then we have
 $$\ord_p(|\Sha(E/\BQ)|)=\ord_p\left(\frac{\msl_\varphi^{(r)}(1) }{R_p(E/\BQ)}\cdot \prod_{v\mid p}\left((1-\varphi_E(v))\left(1-\ov{\varphi_E(v}\right)\right)^{-1}\right).$$
\end{thm}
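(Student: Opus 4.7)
The plan is to derive parts~(1) and~(2) by combining Rubin's two-variable main conjecture \cite{rubin91} with a Schneider--Perrin-Riou type leading term formula, and then descend to $\BQ$. Combining Yager's theorem above with Rubin's main conjecture matches the characteristic ideal of the Pontryagin dual of the $\fp$-part of the Selmer group of $E$ over $K(E[p^\infty])/K$ with $(\mu_{\ff_E^{(\fp)}})$, and analogously for the $\fp^*$-part. Specialising along the cyclotomic line via $\varphi\chi_{\mathrm{cyc},K}^{1-s}$ on one side and $\ov\varphi\chi_{\mathrm{cyc},K}^{1-s}$ on the other produces one-variable main conjectures generated by $\msl_\varphi(s)$ and $\msl_{\ov\varphi}(s)$. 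Mazur's control theorem identifies the $s=1$ cokernel on each Iwasawa Selmer group with the classical $\fp$- (resp.\ $\fp^*$-)Selmer group of $E/K$, whose $\CO_\fp$-corank equals $r$ by the finiteness of $\Sha(E/K)(p)$, establishing part~(1).

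For the leading coefficient I would invoke the $p$-adic analogue of Schneider's BSD-type leading-term formula, extended to the CM ordinary setting by Perrin-Riou: under the assumed non-degeneracy of the cyclotomic $p$-adic height, the $r$-th Taylor coefficient at $s=1$ of a generator of $\Char(\Sel_{\fp^\infty}^\vee)$ equals, up to a $p$-adic unit, $|\Sha(E/K)[\fp^\infty]|$ times the determinant of the $p$-adic height on the $\fp$-component, modified by the interpolation Euler factor $((1-\varphi_E(\fp))(1-\ov{\varphi_E(\fp)}))^{-1}$. Running this for both $\varphi$ and $\ov\varphi$, using $|\Sha(E/K)(p)|=|\Sha(E/K)[\fp^\infty]|\cdot|\Sha(E/K)[\fp^{*\infty}]|$ and assembling the two height determinants into the global regulator $R_p(E/K)$, yields part~(2). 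The hypothesis on reduction type when $w_K\in\{4,6\}$ ensures Lemma~\ref{period} applies symmetrically at $\fp$ and $\fp^*$, so the good and bad reduction contributions combine uniformly. For the final $\BQ$-assertion I would use $L(s,E/K)=L(s,E)L(s,E^{(K)})$ and its $p$-adic counterpart to factor $\msl_{\ov\varphi}(s)$ in terms of $\msl_\varphi(s)$ up to a $p$-adic unit, together with $\ord_p|\Sha(E/K)|=2\ord_p|\Sha(E/\BQ)|$ for odd $p$ and $R_p(E/K)=R_p(E/\BQ)^2$ up to $p$-units, to extract the square root of the $K$-formula.

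I expect the main obstacle to be the leading term formula in the potentially, but not actually, good ordinary case at $\fp$. When $\varphi_\fp$ is ramified the Euler factor $(1-\varphi_E(\fp))$ appearing in Katz's interpolation degenerates to $1$, and one must verify that the Schneider--Perrin-Riou formula continues to hold with exactly the shape of Euler factors stated. The natural strategy is to pass to the auxiliary good-reduction curve $E'$ constructed in the proof of Lemma~\ref{period} via the twist $\epsilon=\varphi(\varphi')^{-1}$, but this requires tracking not just periods---which the lemma handles---but also Tamagawa numbers, local $\fp$-Selmer conditions, and the ratio of $p$-adic regulators between $E$ and $E'$; packaging these comparisons uniformly in the residual character of $\epsilon$ will form the technical heart of the argument.
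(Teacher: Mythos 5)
Your outline correctly identifies Rubin--Yager plus a Schneider/Perrin-Riou leading-term formula as the engine, and you have also correctly put your finger on the genuine difficulty: Schneider's descent formula is proved for abelian varieties with \emph{good} ordinary reduction above $p$, so when $E$ is only \emph{potentially} good ordinary at $\fp$ one cannot apply it to $E/K$ directly. But your proposed remedy --- passing to the auxiliary good-reduction twist $E'$ over $K$ and then tracking Tamagawa numbers, local conditions, and $p$-adic regulators in the comparison --- is exactly the route the paper avoids, and you explicitly leave it as a ``technical heart'' you have not filled in. That is a real gap, not a detail.

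The paper's simplification, which you are missing, is not to compare $E$ with $E'$ over $K$ at all. Instead one works with $E$ over the abelian extension $F/K$ cut out by $\epsilon=\varphi(\varphi')^{-1}$ (so $[F:K]\mid w_K$, and $p\nmid [F:K]$). Over $F$ the curves $E$ and $E'$ become isomorphic and $E$ has good ordinary reduction above $p$, so Schneider's descent diagram applies verbatim to $E/F$. The diagram is $\Delta=\Gal(F/K)$-equivariant, and because $p\nmid|\Delta|$ one can take the $\epsilon$-eigenspace (equivalently, $\Delta$-invariants) exactly. Since $E'(F)^{(\epsilon)}\cong E(K)$ and $\Sha(E'/F)[p^\infty]^{(\epsilon)}\cong\Sha(E/K)[p^\infty]$, this isolates precisely the data of $E/K$ --- no Tamagawa or regulator comparison between two curves is ever needed. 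What remains is a single local computation: evaluating
\[
\Bigl|H^1\bigl(\Gal(F(\mu_{p^\infty})/F),\,E(F(\mu_{p^\infty})\otimes_K K_{v_0})\bigr)^\Delta\Bigr|
\]
for $v_0\in\{\fp,\fp^*\}$ and showing it is $\sim(1-\varphi_E(v_0))(1-\ov{\varphi_E(v_0)})$. This is done by splitting into cases (ramified $v_0$ when $E$ has bad reduction; split or inert $v_0$ when $E$ has good reduction) and applying Schneider's Proposition~2 relating the $H^1$ to $|\wt{E'}(\kappa_w)[p^\infty]|$. In your write-up this local term is asserted rather than proved, and the case analysis is where the hypothesis on $w_K\in\{4,6\}$ is actually used (it forces $v_0$ ramified in $F$ and $\epsilon$ nontrivial on inertia, collapsing the ratio to $1$). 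So: your strategy is not wrong in spirit, but you have set yourself a much harder comparison problem than necessary, and the local Euler-factor computation --- the piece that makes the stated formula come out right in the bad-reduction case --- is absent.
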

\begin{proof}Let $\epsilon$ be a Galois character over $K$ valued in $\CO_K^\times$ such that $\varphi'=\varphi\epsilon$ is unramified at both $\fp$ and $\fp^*$.  Let $E'$ be the elliptic curve over $K$ as $\epsilon$-twist of $E$ so that $\varphi'$ as its Hecke character. Then $E'$ has good reduction above $p$. Let $F$ be the abelian extension over $K$ cut by $\epsilon$, then $[F:K]|w_K$. Moreover, $E$ and $E'$ are isomorphism over $F$, $E'(F)^{(\epsilon)}\cong E(K)$, and $\Sha(E'/F)[p^\infty]^{(\epsilon)}\cong \Sha(E/K)[p^\infty]$. Let $F_0=F(E[p])$ and $\chi: \Gal(F_0)/K)\ra \CO_\fp^\times$ be the character  giving the action on $E[p]$.

Let $F_\infty=F(E[p^\infty])$. Let $M_{\infty, \fp}$ be the maximal $p$-extension over $F_\infty$ unramified outside $\fp$ and $X_{\infty, \fp}=\Gal(M_{\infty, \fp}/F_\infty)$. Denote by $U_\infty$ and $C_\infty\subset U_\infty$ the $\Lambda=\BZ[[\Gal(F_\infty/K)]]$-modules of the principal local units at $\fp$ and elliptic units for the extension $F_\infty$ (defined as in \cite{rubin91}, \S 4). Rubin's two variable main conjecture, together Yager \cite{Yager}, says that
$$\Char_\Lambda(X^\chi_{\infty, \fp})\BD[[\Gal(F_\infty/F_0)]]=\left(\mu^\chi_{f_E^{(p)}, \fp}\right),$$
 where for an integral ideal $\fg$ of $K$ prime to $p$, the measure $\mu_\fg$ is given as in Theorem  \ref{measure2}. Let $\Sel(F_\infty, E[\fp^\infty])$ be the $\fp$-Selmer group of $E$ over $F_\infty$ and $\Sel(F_\infty, E[\fp^\infty])^\vee$ its Pontryajin dual. Then $\Sel(F_\infty, E[\fp^\infty])^\vee$ is a finitely generated $\Lambda$-torsion module and
 $$\Char_\Lambda (\Sel(F_\infty, E[\fp^\infty])^\vee)=\iota_\fp \Char (X_{\infty, \fp}^\chi),$$
 where $\iota_\fp: \Lambda\ra \Lambda, \gamma \lra \kappa_\fp(\gamma)\gamma$ for any $\gamma \in \Gal(F_\infty/K)$ and $\kappa_\fp$ is the character of $\Gal(F_\infty/K)$ giving the action on $E[\fp^\infty]$.   Similarly, we also have that $$\Char_\Lambda(X^\chi_{\infty, \fp^*})\BD[[\Gal(F_\infty/F_0)]]=\left(\mu^\chi_{f_E^{(p)}, \fp^*}\right),  \qquad \Char_\Lambda (\Sel(F_\infty, E[\fp^{* \infty}])^\vee)=\iota_{\fp^*} \Char (X_{\infty, \fp^*}^\chi).$$

 Let $F_\cyc$ be the cyclotomic $\BZ_p$ extension, and $\Lambda_\cyc=\BZ_p[[\Gal(F_\cyc/K)]]\cong \Delta \times \Gamma$ where $\Delta=\Gal(F/K)$ and $\Gamma=\Gal(F_\cyc/F)$.  Let $\Sel(F_\cyc, E[p^\infty])$ denote the $p$-Selmer group of $E$ over $F_\cyc$ and then its Pontryagin dual $\Sel(F_\cyc, E[p^\infty])^\vee$ is a finitely generated torsion $\Lambda_\cyc$-module. We have
 $$\begin{aligned}
 \Sel(F_\cyc, E[p^\infty])&=\Sel(F_\cyc, E[p^\infty])\oplus \Sel(F_\cyc, E[p^{*\infty}])\\
 &=\Hom(X_{\infty, \fp}, E[\fp^\infty])^{\Gal(F_\infty/F_\cyc)}\oplus \Hom(X_{\infty, \fp^*}, E[\fp^{*\infty}])^{\Gal(F_\infty/F_\cyc)}\\
 \end{aligned}$$
 Here the second equality is given \cite{PR1} Proposition (1.3), Theorem  (1.6) and Lemma (1.1), the last one is by the same reason as \cite{rubin91} Theorem 12.2. It follows that
 $$\Char_{\Lambda_\cyc}(\Sel(F_\cyc, E[p^\infty])^\vee)\BD[[\Gal(F_\cyc/F)]]=\left(\iota_\fp \mu^\chi_{f_E^{(p)}, \fp} \iota_{\fp^*} \mu^\chi_{f_E^{(p)}, \fp^*} \right).$$
 Denote by $\chi_\cyc$ the cyclotomic character. Let $f_E$ be a generater of $\Char_{\BZ_p[[\Gamma]]}\left(\Sel(F_\cyc, E[p^\infty])^{\vee}\right)^\Delta$ and define
 $$\msl(s)=\chi_\cyc^{1-s}(f_E), \qquad \forall s\in \BZ_p.$$
 Then we have $\msl(s)=u(s)\msl_{\varphi_E}(s)\msl_{\ov{\varphi_E}}(s)$ for some function $u(s)$ valued in $\BD^\times$.

Note that $E$ over $F$ has good reduction above $p$. Employing the descend argument as in \cite{Schneider1},  noting that the ``descent diagram" in \cite{Schneider1} \S 7 for $E$ over $F$ is $\Delta=\Gal(F/K)$-equivariant, and taking $\Delta$-invariant part, we have
\begin{prop} Let $r:=\rank_{\CO_K} E(K)$. Assume that $\Sha(E/K)[p^\infty]$ is finite and $p$-adic height pairing is non-degenerate on $E(K)$. Then $\msl(s)$ has exact vanishing order $2r$ at $s=1$ and if let $\msl^*(1)$ denote its leading coefficient at $s=1$,
$$\frac{\msl^*(1)}{R_p(E/K)}\sim  |\Sha(E/K)|\cdot \left|\prod_{v|p} H^1(\Gal(F(\mu_{p^\infty})/F), E(F(\mu_{p^\infty})\otimes_K K_v))^\Delta\right|^2.$$
Here for any $a, b\in \BC_p^\times$, write $a\sim b$ if $\ord_p(a/b)=0$.
\end{prop}
The follow lemma will complete the proof.
\begin{lem}Let $v_0=\fp$ or $\fp^*$. Assume that if $w_K=4$ or $6$ then  $E$ has bad reduction at both $\fp$ and $\fp^*$ or good reduction at both $\fp$ and $\fp^*$. Then
$$\left| H^1(\Gal(F(\mu_{p^\infty})/F), E(F(\mu_{p^\infty})\otimes_K K_{v_0}))^\Delta\right|\sim (1-\varphi_E(v_0))(1-\ov{\varphi_E(v_0)}).$$
\end{lem}
The remain part of this section will devote to the proof of this lemma. Note that \cite{Schneider1} handled the case where $E$ has good reduction above $p$. We now assume that $E$ has bad reduction either at $\fp$ or at $\fp^*$. The isomorphism between $E$ and $E'$ over $F$ gives rise to an isomorphism
$$H^1\left(\Gal(F(\mu_{p^\infty})/F), E(F(\mu_{p^\infty})\otimes_K K_{v_0})\right)^\Delta \stackrel{\sim}{\lra} H^1\left(\Gal(F(\mu_{p^\infty})/F), E'(F(\mu_{p^\infty})\otimes_K K_{v_0})\right)^\epsilon.$$ We will need  Proposition 2 in \cite{Schneider1} that for any elliptic curve $A$ over a local field $k$ with good ordinary reduction and let $\wt{A}$ denote its reduction over the the residue field $\kappa$ of $k$, we have
$$|H^1\left(\Gal(k(\mu_{p^\infty})/k), A(k(\mu_{p^\infty}))\right)=|\wt{A}(\kappa)[p^\infty]|.$$
Let $w|v_0$ be a place of $F$ above $v_0$ and $\kappa_w/\kappa_{v_0}$ be the residue fields of $F_w$ and $K_{v_0}$ respectively, we have
$$|E'(\kappa_w)|\sim \left(1-\varphi_{E'}(v_0)^{[\kappa_w: \kappa_{v_0}]}\right)\left(1-\ov{\varphi_{E'}(v_0)}^{[\kappa_w: \kappa_{v_0}]}\right).$$

If $W_K=2$, then $F/K$ is a quadratic extension. If $E$ is ramified at $v_0$, then $F/K$ is ramified at $v_0$ and let $w$ be the unique place of $F$ above $v_0$, we have $\kappa_w=\kappa_{v_0}$ and thus
$$\begin{aligned}
\left|H^1\left(\Gal(F(\mu_{p^\infty})/F), E'(F(\mu_{p^\infty})\otimes_K K_{v_0})\right)^\epsilon\right|&=
\frac{|H^1\left(\Gal(F_w(\mu_{p^\infty})/F_w), E'(F_w(\mu_{p^\infty}))\right)|}{|H^1\left(\Gal(K_{v_0}(\mu_{p^\infty})/K_{v_0}), E'(K_{v_0}(\mu_{p^\infty}))\right)|}\\
&=\frac{|\wt{E'}(\kappa_w)|}{|\wt{E'}(\kappa_{v_0})|}=1.\end{aligned}$$
If $E$ has good reduction at $v_0$, then $F/K$ is unramified at $v_0$. If $v_0$ is split over $F$, then $F\otimes_K K_{v_0}\cong K_{v_0}^2$ and $\epsilon_{v_0}=1$. It is easy to see
$$\left|H^1\left(\Gal(F(\mu_{p^\infty})/F), E'(F(\mu_{p^\infty})\otimes_K K_{v_0})\right)^\epsilon\right|\sim (1-\varphi_E(v_0))(1-\ov{\varphi_E(v_0)}).$$
If $v_0$ is inert in $F$, let $w$ be the unique prime of $F$ above $v_0$. Note that $\varphi'_{v_0}=\varphi_{v_0}\epsilon_{v_0}$ and $\epsilon(v_0)=-1$.
$$\begin{aligned}
\left|H^1\left(\Gal(F(\mu_{p^\infty})/F), E'(F(\mu_{p^\infty})\otimes_K K_{v_0})\right)^\epsilon\right|&=
\frac{|H^1\left(\Gal(F_w(\mu_{p^\infty})/F_w), E'(F_w(\mu_{p^\infty}))\right)|}{|H^1\left(\Gal(K_{v_0}(\mu_{p^\infty})/K_{v_0}), E'(K_{v_0}(\mu_{p^\infty}))\right)|}
=\frac{|\wt{E'}(\kappa_w)|}{|\wt{E'}(\kappa_{v_0})|}\\
&\sim \frac{(1-(\varphi\epsilon)(v_0)^2)(1-\ov{\varphi\epsilon(v_0)}^2)}
{(1-(\varphi\epsilon)(v_0))(1-\ov{\varphi\epsilon(v_0)})}
=(1-\varphi(v_0))(1-\ov{\varphi(v_0)})
\end{aligned}.$$

If $w_K=4$ or $6$, by our assumption,   $v_0$ must be ramified over $F$ and $\epsilon$ is non-trivial on its inertia subgroup. The proof is now similar to the previous ramified case.

\end{proof}

\section{$\infty$-adic and $p$-adic Gross-Zagier Formulae}

Let $E$ be an elliptic curve over $\BQ$ of conductor $N$ and $\phi$ its associated newform.  Let $p$ be a prime where $E$ is potential good ordinary or potential semi-stable. Let $\alpha: \BQ_p^\times \lra \BZ_p^\times$ be the character character contained in the representation $(V_pE)^{ss}$ of $G_{\BQ_p}$ such that $\alpha|_{\BZ_p^\times}$ is of finite order.

Let $\CK$ be an imaginary quadratic field such that $\epsilon(E/\CK)=-1$ and $p$ splits in $\CK$.  Let  $\Gamma_\CK$ be the Galois group of the $\BZ_p^2$-extension over $\CK$. Recall that \cite{DD} there exists a $p$-adic measure $\mu_{E/\CK}$ on $\Gamma_\CK$ such that for any finite order character $\chi$ of $\Gamma_\CK$
$$\chi(\mu_{E/\CK})=\frac{ L^{(p)}(1, \phi, \chi)}{8\pi^2(\phi, \phi)} \cdot \prod_{w|p}Z_w(\chi_w, \psi_w),$$
where $(\phi, \phi)$ is the Peterson norm of $\phi$:
 $$(\phi, \phi)=\iint_{\Gamma_0(N)\bs \CH} |\phi(z)|^2 dx dy, \qquad z=x+iy,$$
and for each prime $w|p$ of $\CK$, let $\alpha_w=\alpha\circ \RN_{\CK_w/\BQ_p}$ and $\psi_w=\psi_p \circ \RTr_{\CK_w/\BQ_p}$, and let $\varphi_w$ be a uniformizer of $\CK_w$, then
$$Z_w(\chi_w, \psi_w)=\begin{cases}(1-\alpha_w\chi_w(\varpi_w)^{-1})(1-\alpha_w \chi_w(\varpi_w)p^{-1})^{-1}, \qquad &\text{if $\alpha_w \chi_w$ is unramified}, \\
p^n \tau((\alpha_w\chi_w)^{-1}, \psi_w), &\text{if $\alpha_w \chi_w$ is of conductor $n\geq 1$}.
\end{cases}$$

The following lemma will be used to prove our main theorem.
\begin{lem}\label{local}  Let $E$ be an elliptic curve over $\BQ$ with CM by an imaginary quadratic field $K$. Assume $p$ is also split in $K$ write $p\CO_K=\fp\fp^*$ with $\fp$ induced by $\iota_p$, i.e. identify $K_\fp$ with $\BQ_p$ and the non-trivial element $\tau\in \Gal(K/\BQ)$ induces an isomorphism on $\BA_K$ and thus  $\tau: K_{\fp^*}\stackrel{\sim}{\ra} K_\fp=\BQ_p$.  Let $\varphi$ be its associated Hecke character. Then we have
$\alpha= \varphi_{\fp^*}\circ \tau^{-1} $ and $(\alpha^{-1}\chi_{\cyc})(x)=\varphi_\fp (x) x^{-1}$ for any $x\in \BQ_p^\times.$ Moreover,  for any place $w|p$ of $\CK$, any finite order character $\nu: \wh{\BQ}^\times/\BQ^\times \wh{\BZ}^{\times (p)}\BZ_{p, \tor}^\times \ra \mu_{p^\infty}$ viewed as character on $\Gamma_K$ by compose with norm
 $$Z_w(\alpha_w \nu_w, \psi)=\tau(\varphi_\fp \nu_p^{-1}, \psi)\cdot \frac{1-(\varphi_\fp \nu^{-1})(p)p^{-1}}{1-\ov{(\varphi_\fp \nu_p^{-1})}(p)p^{-1}}.$$
 \end{lem}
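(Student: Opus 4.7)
The plan is to identify the two characters of $(V_pE)^{\ss}|_{G_{\BQ_p}}$ from the CM structure of $E$, and then to verify the identity in the third assertion by directly computing $Z_w$ in each of the two cases in its definition.

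For the first two assertions, the starting point is the CM decomposition $V_pE|_{G_K} = V_\fp E \oplus V_{\fp^*}E$, on whose summands $G_K$ acts through the $p$-adic avatars of $\varphi$ and $\varphi^\tau$ respectively. By the standard formula for the local component of the avatar of an algebraic Hecke character at its ``infinity prime'', the $\fp$-local component of the character on $V_\fp E$ is $x\mapsto \varphi_\fp(x)\cdot x^{-1}$, the factor $x^{-1}$ being the infinity-type correction forced by $\varphi|_{K^\times}=1$; the $\fp$-local component of the character on $V_{\fp^*}E$ is $(\varphi^\tau)_\fp(x)=\varphi_{\fp^*}(\tau^{-1}(x))$, with no infinity-type twist at $\fp$ (since $\iota_p$ picks out $\fp$ rather than $\fp^*$ in the avatar of $\varphi^\tau$). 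Restricting to $G_{\BQ_p}=G_{K_\fp}$ gives the two characters in the semi-simplification. The second of these is of finite order on $\BZ_p^\times$ (because $\varphi_{\fp^*}|_{\CO_{\fp^*}^\times}$ is), so it must equal $\alpha$; the Weil-pairing relation $\det V_pE = \chi_\cyc$ then forces $(\alpha^{-1}\chi_\cyc)(x)=\varphi_\fp(x)\cdot x^{-1}$, yielding both (i) and (ii).

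For the local factor identity, plug $\alpha_w\nu_w$ into the defining cases for $Z_w$. Since $p$ splits in $\CK$, for $w\mid p$ we have $\CK_w=\BQ_p$, $\varpi_w=p$ and $N_{\CK_w/\BQ_p}=\id$, so $\alpha_w=\alpha$ and $\nu_w=\nu_p$; by (ii), $\alpha\nu_p$ and $\varphi_\fp\nu_p^{-1}$ agree on $\BZ_p^\times$ up to inversion and hence share the same conductor. In the unramified case, combine $\alpha(p)=\varphi_{\fp^*}(p)=\varphi(\fp^*)$ with the CM identity $\varphi(\fp)\varphi(\fp^*)=p$ to rewrite $(1-\alpha\nu_p(p)^{-1})(1-\alpha\nu_p(p)p^{-1})^{-1}$ into the claimed Euler factor, with the $\tau$-factor trivial. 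In the ramified case of conductor $p^n$, the characters $(\alpha\nu_p)^{-1}$ and $\varphi_\fp\nu_p^{-1}$ agree on $\BZ_p^\times$ while their ratio at $p$ equals $p^{-1}$ (again by $\varphi(\fp)\varphi(\fp^*)=p$); substituting $t=p^{-n}u$ in the defining Gauss-sum integral produces the transformation $\tau((\alpha\nu_p)^{-1},\psi)=p^{-n}\tau(\varphi_\fp\nu_p^{-1},\psi)$, and the $p^n$ prefactor then yields $Z_w=\tau(\varphi_\fp\nu_p^{-1},\psi)$, matching the claim (the Euler-factor fraction being $1$ by convention for a ramified argument).

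The main obstacle is bookkeeping the numerous conventions---the normalization of local class field theory, the infinity type of $\varphi$, and the identifications of $K_\fp$ and $K_{\fp^*}$ with $\BQ_p$ via $\iota_p$ and $\tau$. Once these are pinned down consistently, (i)--(ii) reduce to the classical description of the Tate module of a CM elliptic curve, and (iii) is a short manipulation of Gauss sums and Euler factors.
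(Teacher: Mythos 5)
Your proof is correct and follows essentially the same route as the paper, which compresses the entire argument into citing the two relations $\varphi\ov{\varphi}=|\cdot|^{-1}_{\BA_K^{(\infty)}}$ and $\varphi^\tau=\ov{\varphi}$; you unpack exactly these facts — the CM decomposition $V_pE=V_\fp E\oplus V_{\fp^*}E$ with $\varphi^\tau=\ov{\varphi}$ identifying the finite-order-on-units factor as $\alpha$, and the weight-$2$ relation (equivalently the Weil-pairing determinant) yielding (ii) — before running the expected case-by-case Gauss-sum and Euler-factor calculation for $Z_w$. The only caveat, which you already flag, is that the normalization bookkeeping (arithmetic vs.\ geometric local reciprocity, the sign of the infinity-type twist) must be made consistent for the intermediate formulas such as ``the $\fp$-local component of $V_{\fp^*}E$ is $\varphi_{\fp^*}(\tau^{-1}(x))$'' to come out literally; getting this right implicitly invokes $\varphi^\tau=\ov{\varphi}$ again, so it is worth stating that explicitly rather than leaving it under ``no infinity-type twist.''
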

 \begin{proof}The claim follws from the relations  $\varphi \ov{\varphi}=|\ |^{-1}_{\BA_K^{(\infty)}}$ and $\varphi^\tau=\ov{\varphi}$.

 \end{proof}
 \medskip

 Let $\chi_{\cyc, \CK}: \Gamma_\CK\ra \BZ_p^\times$ denote the $p$-adic cyclotomic character of $G_\CK$.  Let $\chi$ be an anticyclotomic character. Define $\msl_{E/\CK, \chi}$ to be the $p$-adic L-function
$$\msl_{E/\CK, \chi}(s)=\mu_{E/\CK}(\chi\chi_{\cyc, \CK}^{s-1}), \quad s\in \BZ_p.$$
For trivial $\chi$, we write $\msl_{E/\CK}$ for $\msl_{E/\CK, \chi}$.
\begin{thm}[See \cite{YZZ} and \cite{DD}] \label{comparison} Let $E$ be an elliptic curve over $\BQ$ and $\CK$ an imaginary quadratic field. Let $p$ be a potentially good ordinary prime for $E$ and split over $\CK$. Assume that $\epsilon(E/\CK)=-1$. Then
$$\frac{\msl_{E/\CK, \chi}'(1)}{R_p(E/\CK, \chi)} \cdot \frac{L_p(E/\CK, \chi, 1)}{ \prod_{w|p} Z_w(\chi_w, \psi_w)}=\frac{L'(E/\CK, \chi, 1)}{ R_\infty(E/\CK, \chi)\cdot 8\pi^2(\phi, \phi)}.$$
Here $L_p(E/\CK, \chi, 1)$ is the Euler factor at $p$. In particular, $\msl_{E/\CK}'(1)=0$ if and only if $L'(E/K, 1)=0$.
\end{thm}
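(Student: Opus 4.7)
The plan is to deduce the identity by combining two Gross--Zagier type formulas for the same Heegner point: the complex Gross--Zagier formula of Yuan--Zhang--Zhang \cite{YZZ} and the $p$-adic Gross--Zagier formula of Disegni \cite{DD}. Since $\epsilon(E/\CK)=-1$ and $\chi$ is anticyclotomic, both $L(E/\CK,\chi,s)$ and $\msl_{E/\CK,\chi}(s)$ vanish at $s=1$ by their functional equations, so the leading-term formulas at $s=1$ are the right objects to compare.

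First I would fix a Shimura curve parametrization $X \to E$ of conductor compatible with $\CK$ and $\chi$, and let $P_\chi$ denote the $\chi$-component of a Heegner-type CM point on $E$ associated to the Gross--Prasad test vector employed in both \cite{YZZ} and \cite{DD}. The complex Gross--Zagier formula then gives an identity of the shape
$$L'(E/\CK,\chi,1) \;=\; C \cdot 8\pi^2 (\phi,\phi) \cdot (P_\chi, P_\chi)_\infty,$$
where $C$ is an explicit product of local constants (archimedean factors and orbital integrals at ramified places) depending only on the test vector. Disegni's $p$-adic Gross--Zagier formula, applied to the same test vector, yields the companion identity
$$\msl_{E/\CK,\chi}'(1) \;=\; C \cdot (P_\chi, P_\chi)_p \cdot \frac{\prod_{w\mid p} Z_w(\chi_w,\psi_w)}{L_p(E/\CK,\chi,1)},$$
with the same constant $C$ by construction, and with the $p$-local factors matching precisely those occurring in the interpolation formula defining $\mu_{E/\CK}$.

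Dividing the second identity by the first, the constant $C$ cancels; in the rank-one $\chi$-isotypic situation relevant here, the regulators $R_\infty(E/\CK,\chi)$ and $R_p(E/\CK,\chi)$ equal $(P_\chi, P_\chi)_\infty$ and $(P_\chi, P_\chi)_p$ respectively, divided by the same square-of-index factor, so these index factors cancel as well. Rearranging gives precisely the claimed equality. The ``in particular'' statement is then immediate: the factors $L_p(E/\CK,1)$ and $\prod_{w\mid p} Z_w$ are nonzero for trivial $\chi$ (since $p$ splits in $\CK$ and $E$ is potentially good ordinary), so $\msl_{E/\CK}'(1)$ vanishes if and only if $L'(E/\CK, 1)$ does.

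The main obstacle is the precise bookkeeping required to verify that the constant $C$ produced by \cite{YZZ} and the analogous constant produced by \cite{DD} are really the same. This demands aligning conventions on both sides --- the Haar measures on local groups, the normalization of the Petersson inner product $(\phi,\phi)$ relative to the Hodge line bundle on $X$, the Atkin--Lehner operators used to pass between $\phi$ and its companion forms, the identification of CM points under the modular parametrization, and the choice of local test vectors at every place of ramification. Once these conventions have been matched, which is the substantive comparison content behind the joint citation to \cite{YZZ} and \cite{DD}, the identity reduces to a purely formal quotient of two parallel formulas evaluated on the same Heegner point.
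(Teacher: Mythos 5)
Your proposal follows the same strategy as the paper's proof: realize the Heegner cycle $P_\chi(f)$ on a suitable Shimura curve, write both the complex Gross--Zagier formula of \cite{YZZ} and the $p$-adic Gross--Zagier formula of \cite{DD} with the \emph{same} test vector so that the local constant (what you call $C$, which in the paper is the product $\frac{4L(1,\eta)}{\pi c_\CK}\cdot\frac{L(1,\pi,\ad)}{8\pi^3(\phi,\phi)}\cdot\alpha^{-1}(f,\chi)$) is literally identical on both sides, invoke Kolyvagin's rank-one conclusion so that heights and regulators agree up to a common index factor, and divide. This matches the paper's argument essentially line for line, including the observation that the ``in particular'' clause comes for free once the $p$-local Euler factors are seen to be nonzero.

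One small remark on ordering: the paper first uses the two cited theorems to record the equivalence $L'(E/\CK,\chi,1)=0\iff P_\chi(f)$ torsion $\iff\msl_{E/\CK,\chi}'(1)=0$, and \emph{then} reduces to the nonvanishing case (where Kolyvagin gives rank one) to prove the displayed identity; in the vanishing case both sides are zero. You instead prove the identity first and then read off the vanishing equivalence from it. Since the rank-one step in your second paragraph is only available when $L'\neq 0$, it is cleaner to record the vanishing equivalence directly from the two Gross--Zagier formulas (each of which expresses the derivative as a nonzero multiple of the corresponding height of $P_\chi(f)$) before dividing, exactly as the paper does; this avoids any appearance of circularity. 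That is a cosmetic point, not a gap.
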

\begin{proof}
  Let $B$ be an indefinite quaternion algebra over $\BQ$ ramified exactly at the places $v$ of $\BQ$ where $\epsilon_v(E/\CK,\chi)\eta_v(-1)=-1$. It is known that there exists a Shimura curve $X$ over $\BQ$ (with suitable level) and a non-constant morphism $f: X\ra E$ over $\BQ$ mapping a divisor in Hodge class to the identity of $E$ such that its corresponding Heegner cycle $P_\chi(f)$ is non-trivial if and only if $L'(1, \phi, \chi)\neq 0$ by Theorem 1.2 in \cite{YZZ}, and if and only if $\msl'_{E/\CK, \chi}(1)\neq 0$ by Theorem  B in \cite{DD}. Thus $L'(E/\CK,\chi,1)=0$ if and only if $\msl'_{E/\CK,\chi}(1)=0$.

Now assume that $L'(E/\CK,1)\neq0$. By an argument of Kolyvagin, we know that  $(E(K_\chi)\otimes\CO_\chi)^\chi$ is of $\CO_\chi$-rank one,    $$\dfrac{\wh{h}_\infty(P_\chi(f))}{R_\infty(E/\CK,\chi)}=
\dfrac{\wh{h}_p(P_\chi(f))}{R_p(E/\CK,\chi)}\in\ov{\BQ}^\times.$$
	
By \cite{YZZ} theorem 1.2,
	$$
	\frac{L'(E/\CK, \chi, 1)}{ R_\infty(E/\CK, \chi)\cdot 8\pi^2(\phi, \phi)}=\dfrac{\wh{h}_{NT}(P_\chi(f))}{R_\infty(E/\CK,\chi)}\dfrac{4L(1,\eta)}{\pi c_\CK}\dfrac{L(1,\pi,\ad)}{8\pi^3(\phi,\phi)}\alpha^{-1}(f,\chi)$$
	and by \cite{DD} theorem B (with our definition of $\msl_{E/\CK, \chi}$),
	$$\dfrac{\msl_{E/\CK, \chi}'(1)}{R_p(E/\CK,\chi)}=\dfrac{h_p(P_\chi(f))}{R_p(E/\CK,\chi)}\dfrac{4L(1,\eta)}{\pi c_\CK}\dfrac{\prod_{w|p}Z_w(\chi_w,\psi_w)}{L_p(E/\CK,\chi,1)}\dfrac{L(1,\pi,\ad)}{8\pi^3(\phi,\phi)}\alpha^{-1}(f,\chi),$$
where the $\alpha(f,\chi)\in\ov{\BQ}^\times$. The theorem follows.
\end{proof}
Now we give an explicit form of $p$-adic Gross-Zagier formula as an application. Let $c$ be the conductor of $\chi$.  Assume the following Heegner hypothesis holds:
\begin{enumerate}
\item $(c, N)=1$,  and no prime divisor $q$ of $N$ is  inert in $\CK$,  and  also $q$ must be split in $\CK$ if $q^2|N$.
\item  $\chi([\fq])\neq a_q$ for any prime $q|(N, D)$, where $\fq$ is the unique prime ideal of $\CO_\CK$ above $q$  and $[\fq]$  is its class in $\Pic(\CO_c)$.
\end{enumerate}
Let $X_0(N)$ be the modular curve over $\BQ$, whose $\BC$-points parametrize isogenies $E_1\ra E_2$ between elliptic curves over $\BC$ whose kernel  is cyclic of order $N$. By  the Heegner condition, there exists a proper ideal $\CN$ of $\CO_c$ such that $\CO_c/\CN \cong \BZ/N\BZ$. For any proper ideal $\fa$ of $\CO_c$, let $P_\fa\in X_0(N)$ be the point representing the isogeny $\BC/\fa \ra \BC/\fa\CN^{-1}$, which is defined over the ring class field $H_c$ over $\CK$ of conductor $c$, and only depends on the class of $\fa$ in $\Pic(\CO_c)$.  Let $J_0(N)$ be the Jacobian of $X_0(N)$.   Let $f: X_0(N)\ra E$ be a modular parametrization mapping the cusp $\infty$ at infinity to the identity $O\in E$.  Denote by $\deg f$  the degree of the morphism $f$. Define the Heegner divisor to be
$$P_\chi(f):=\sum_{[\fa] \in \Pic(\CO_c)} f(P_\fa)\otimes \chi([\fa])\in E(H_c)_{\ov{\BQ}}.$$
\begin{thm}  Let $E, \chi$ be as above satisfying the Heegner conditions (1) and (2). Then
$$L'(1, E, \chi)=2^{-\mu(N, D)} \cdot \frac{8\pi^2 (\phi, \phi)_{\Gamma_0(N)}}{u^2 \sqrt{|Dc^2|}} \cdot \frac{\wh{h}_\infty(P_\chi(f))}{\deg f},$$
where $\mu(N, D)$ is the number of prime factors of the greatest common divisor of $N$ and $D$, $u=[\CO_c^\times:\BZ^\times]$ is half of the number of roots of unity in $\CO_c$, and $\wh{h}_\infty$ is the \Neron-Tate height on $E$ over $\CK$.

 Moreover, let $p$ be a prime split in $\CK$ and assume that $E$ is potential ordinary at $p$ (i.e. either potential good ordinary or potential semistable), then we have
$$\msl_{E/\CK, \chi}'(1)= \frac{ \prod_{w|p} Z_w (\chi_w, \psi_w)}{L_p(E/\CK, \chi, 1)}\cdot\frac{2^{-\mu(N, D)}}{u^2 \sqrt{|Dc^2|}} \cdot \frac{\wh{h}_p(P_\chi(f))}{\deg f},$$
where $\wh{h}_p$ is the $p$-adic height on $E$ over $\CK$.
\end{thm}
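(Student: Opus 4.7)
The plan is to derive both explicit formulas from the adelic Gross--Zagier formula of \cite{YZZ} and its $p$-adic counterpart in \cite{DD}, by making the local constants explicit under the Heegner hypothesis. Once the complex formula is established, the $p$-adic version drops out of Theorem \ref{comparison} with essentially no extra work, so the substance lies in the complex statement. Concretely I would start from the identity recalled in the proof of Theorem \ref{comparison},
\[L'(1, E/\CK, \chi) = \wh{h}_\infty(P_\chi(f)) \cdot \frac{4L(1,\eta)}{\pi c_\CK} \cdot \frac{L(1,\pi,\ad)}{\pi} \cdot \alpha^{-1}(f,\chi),\]
and rewrite each factor in classical language.

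Under the Heegner hypothesis the adelic CM cycle appearing in \cite{YZZ} on $X_0(N)$ is identified with the classical divisor $P_\chi(f) = \sum_{[\fa]\in\Pic(\CO_c)} f(P_\fa)\otimes\chi([\fa])$, via the moduli interpretation of $X_0(N)$ and the existence of the proper ideal $\CN$ with $\CO_c/\CN \cong \BZ/N\BZ$; the coprimality $(c,N)=1$ ensures that no extra multiplicity arises. Combining Dirichlet's class number formula $L(1,\eta) = 2\pi h_\CK/(w_\CK\sqrt{|D|})$ with the standard expression for $|\Pic(\CO_c)|$ and the Haar-measure constant $c_\CK$ accounts for the denominator $u^2\sqrt{|Dc^2|}$; the standard Petersson--adjoint relation expresses $L(1,\pi,\ad)$ in terms of $(\phi,\phi)_{\Gamma_0(N)}$, with $\deg f$ entering through the modular parametrization to produce the factor $8\pi^2(\phi,\phi)_{\Gamma_0(N)}/\deg f$ after cancellation.

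The main obstacle is the explicit evaluation of the local test-vector constants $\alpha_v = \alpha_v(f_v,\chi_v)$. At archimedean and at finite primes unramified in both $N$ and $\chi$ one has $\alpha_v = 1$. At a split prime $q \mid N$ with $q \nmid D$, the $\GL_2(\BZ_q)$-newvector already coincides with the Gross--Prasad test vector and contributes $\alpha_q = 1$. At each prime $q \mid (N,D)$, necessarily with $q \| N$ under hypothesis (1), the condition $\chi([\fq]) \ne a_q$ of hypothesis (2) forces the local root number $\epsilon_q(E/\CK,\chi) = +1$ and selects the correct element in the local new line, yielding $\alpha_q = 1/2$. This case-by-case analysis, following the bookkeeping of \cite{CST}, produces the factor $2^{-\mu(N,D)}$ and is where the bulk of the technical work lies.

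Finally, with the complex formula in hand, the $p$-adic formula drops out of Theorem \ref{comparison}. When $L'(1,E/\CK,\chi) \ne 0$, Kolyvagin's argument recalled in the proof of Theorem \ref{comparison} gives $\rank_{\CO_\chi}(E(K_\chi)\otimes\CO_\chi)^\chi = 1$, so the rank-one identity $\wh{h}_p(P_\chi(f))/R_p(E/\CK,\chi) = \wh{h}_\infty(P_\chi(f))/R_\infty(E/\CK,\chi)$ holds. Substituting the just-proved complex formula into the right-hand side of Theorem \ref{comparison} and cancelling $R_\infty$ and $8\pi^2(\phi,\phi)$ yields
\[\msl_{E/\CK,\chi}'(1) = \frac{\prod_{w|p} Z_w(\chi_w,\psi_w)}{L_p(E/\CK,\chi,1)} \cdot \frac{2^{-\mu(N,D)}}{u^2\sqrt{|Dc^2|}} \cdot \frac{\wh{h}_p(P_\chi(f))}{\deg f},\]
as claimed; the case $L' = 0$ is handled simultaneously by Theorem \ref{comparison}, which forces both $\wh{h}_p(P_\chi(f))$ and $\msl_{E/\CK,\chi}'(1)$ to vanish.
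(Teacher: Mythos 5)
Your proposal follows exactly the route the paper takes: the complex formula is obtained by the local computation/bookkeeping of \cite{CST} applied to the adelic Gross--Zagier formula of \cite{YZZ}, and the $p$-adic formula is then read off from Theorem \ref{comparison} by substituting the complex formula and cancelling $R_\infty$ and $8\pi^2(\phi,\phi)$. The paper's proof is just the two-line citation of these same two ingredients, so your sketch unpacks the same argument rather than offering a different one.
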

\begin{proof}The explicit form of Gross-Zagier formula is proved in \cite{CST}. The explicit form of $p$-adic Gross-Zagier formula then follows from the relation in Theorem \ref{comparison}.
\end{proof}
\section{Proof of Main Theorem \ref{main thm}}
In this section, let $E$ be an elliptic curve over $\BQ$ with CM by $K$ and $\Omega_E$ the minimal real period of $E$ over $\BQ$.   Let $p\nmid w_K$ be a prime split both in $K$.
\begin{lem}\label{comparison} Let $\CK$ be an imaginary quadratic field where $p$ splits, $\eta$ the associated quadratic character, and $\eta_K$ its base change to $K$. Assume that $\epsilon(E/\CK)=-1$. Then there exists a $p$-adic unit $u$ such that
$$\msl_{E/\CK}=\frac{\tau(\varphi_\fp, \psi_\fp)^2\cdot\Omega_E^2}{8\pi^2(\phi, \phi)}\cdot \msl_\varphi \msl_{\varphi\eta_K}.$$
\end{lem}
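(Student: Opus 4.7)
The plan is to prove equality of the two $p$-adic analytic functions on $\BZ_p$ (equivalently, equality as measures on $\Gamma = \Gal(\BQ_\infty/\BQ)$ up to a $p$-adic unit) by matching their values on a dense set, namely the finite order characters $\nu$ of $\Gamma$. For such a $\nu$, I will compute both sides using the defining interpolation formulas, invoke the factorization of complex $L$-series $L(s,E/\CK\otimes\nu) = L(s,E\otimes\nu)\,L(s,E^{(\CK)}\otimes\nu)$ (with CM translation $L(s,E\otimes\nu) = L(s,\ov{\varphi}\ov{\nu_K})$ and analogously with $\eta_K$), and check that the local factors at $p$ match via Lemma \ref{local}.

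Concretely, the first step is to write the LHS value using the interpolation of $\mu_{E/\CK}$:
$$\nu(\msl_{E/\CK})= \frac{L^{(p)}(1,\phi,\nu_\CK)}{8\pi^2(\phi,\phi)}\cdot \prod_{w|p}Z_w(\nu_{\CK,w},\psi_w).$$
The second step is to compute each factor on the RHS using Theorem \ref{measure2} applied to the Hecke characters $\varphi$ and $\varphi\eta_K$ (both viewed relative to $K$), which gives
$$\nu(\msl_\varphi)= \frac{\tau((\varphi\nu_K)_\fp,\psi_\fp)}{\tau(\varphi_\fp,\psi_\fp)}\cdot\frac{1-(\varphi\nu_K)(\fp)p^{-1}}{1-\ov{(\varphi\nu_K)(\fp)}\,p^{-1}}\cdot\frac{L^{(\ff_E p)}(\ov{\varphi\nu_K},1)}{\Omega_E},$$
and the analogous expression for $\nu(\msl_{\varphi\eta_K})$. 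Lemma \ref{period} ensures that the $\fp$-minimal periods for $\varphi$ and $\varphi\eta_K$ agree with $\Omega_E$ up to a $p$-adic unit, so the period contribution from the product is $\Omega_E^{-2}$ up to a unit, matching the $\Omega_E^2$ in the numerator of the claimed formula. Multiplying the two Katz interpolations gives a product of two $L$-series whose combined Euler product (after restoring the $p$-Euler factors) equals $L^{(p)}(1,\phi,\nu_\CK)$ up to bad-prime and $p$-Euler adjustments that are $p$-adic units.

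The third and most delicate step is matching the $p$-local factors. For each prime $w|p$ of $\CK$, Lemma \ref{local} rewrites $Z_w(\alpha_w\nu_w,\psi_w)$ as $\tau(\varphi_\fp\nu_p^{-1},\psi_p)\cdot\frac{1-(\varphi_\fp\nu_p^{-1})(p)p^{-1}}{1-\ov{(\varphi_\fp\nu_p^{-1})}(p)p^{-1}}$, which is precisely the shape of the $p$-local factors appearing in the two Katz interpolations (one place of $\CK$ above $p$ contributing the $\varphi$-factor, the other the $\varphi\eta_K$-factor, after accounting for the $\fp^*$ half via the relation $\varphi^\tau = \ov{\varphi}$ and the splitting of $p$ in both $K$ and $\CK$). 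Combining these matches, the two local Gauss sums produce $\tau(\varphi_\fp,\psi_\fp)^{-2}$ in the denominator of the Katz product, which after moving to the other side yields the factor $\tau(\varphi_\fp,\psi_\fp)^2$ in the normalizing constant. The remaining Petersson factor $8\pi^2(\phi,\phi)$ comes unchanged from the Rankin–Selberg normalization, and all other discrepancies (bad Euler factors at primes dividing $\ff_E$ but not $p$, Stickelberger-type units from $\tau$-ratios, etc.) are absorbed into the $p$-adic unit $u$.

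The main obstacle is the bookkeeping for the local factors at $p$: one must distinguish the roles of $\fp,\fp^*$ in $K$ and of the two places of $\CK$ above $p$, match the ramified and unramified cases of $\alpha_w\chi_w$ in Lemma \ref{local} against the corresponding type-$(1,0)$ twists in Theorem \ref{measure2}, and verify that the Rankin–Selberg Euler factors at $p$ not appearing in Katz's formula contribute only a $p$-adic unit. Once these checks are made, density of finite order characters of $\Gamma$ in $\BZ_p$ upgrades the pointwise equality modulo $p$-units to the asserted identity of $p$-adic $L$-functions.
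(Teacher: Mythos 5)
Your proposal follows essentially the same route as the paper: reduce to checking equality at finite-order characters $\nu$ of $\Gamma$, expand both sides via the interpolation formulas of Theorem \ref{measure2} and of $\mu_{E/\CK}$, invoke the $L$-series factorization $L(s,E/\CK\otimes\nu)=L(s,E\otimes\nu)L(s,E^{(\CK)}\otimes\nu)$, and use Lemma \ref{local} to match the $p$-local Gauss sums and Euler factors. One small inaccuracy to be aware of: since $p$ splits in $\CK$, the two places $w\mid p$ of $\CK$ each contribute the \emph{same} local factor $Z_w(\alpha_w\nu_w,\psi_w)$ (together giving the square), rather than one place carrying the $\varphi$-factor and the other the $\varphi\eta_K$-factor; the split into $\msl_\varphi$ and $\msl_{\varphi\eta_K}$ comes purely from the $L$-series factorization, not from a division among places of $\CK$.
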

\begin{proof}
	It's enough to show that for any finite order character $ \nu:\wh{\BQ}/\BQ^\times\wh{\BZ}^{\times(p)}\BZ^\times_{p, \tor} \to\BC^\times$, we have
$$\nu_\CK(\mu_{E/K})=\tau^2(\varphi_p,\psi_p)\dfrac{\Omega_E^2}{8\pi(\phi,\phi)}\mu_{\ff_0}(\varphi\nu^{-1}_K)\mu_{\ff_0}(\varphi\eta_K\nu^{-1}_K).$$
Here $\nu_\CK=\nu\circ \RN_{K/\BQ}$ and $\nu_K=\nu\circ \RN_{K/\BQ}$. By interpolation property, the left hand of the formula in the lemma is
$$\dfrac{L^{(p)}(1,\phi,\nu^{-1}_\CK)}{8\pi^2(\phi,\phi)}\prod_{w|p}Z_w(\alpha_w\nu_w,\psi_w),$$
Note that $\CK/\BQ$ splits at $p$ and then $\eta_p$ is trivial, the right hand side of the formula in the lemma is
	$$\dfrac{\tau(\varphi_\fp\nu_\fp^{-1}, \psi_\fp)^2}{\tau(\varphi_\fp, \psi)^2}\cdot \left(\dfrac{1-\varphi\nu^{-1} (\fp)p^{-1}}{1-\ov{\varphi\nu^{-1} (\fp)}p^{-1}}\right)^2\cdot \dfrac{L^{(p\ff_0)}(\ov{\varphi\nu^{-1}}, 1)}{\Omega} \cdot \dfrac{L^{(p\ff_0)}(\ov{\varphi\nu^{-1}\eta_K}, 1)}{\Omega}$$
Then the formula follows from lemma \ref{local}.
\end{proof}
We are ready to prove Theorem \ref{main thm}. Assume that $L(s, E/\BQ)$ has a simple zero at $s=1$ and that $p$ is a bad but potentially good ordinary prime for $E$. Let $\varphi$ be the Hecke character associated to $E$ and $\ff_0$ its the prime-to-$p$ conductor.  We may choose an imaginary quadratic field $\CK$ such that
\begin{itemize}
\item $L(s, E/\CK)$ also has a simple zero at $s=1$.
\item $p$ is splits in $\CK$.
\item the discriminant of $\CK$ is prime to $\ff_0$.
\end{itemize}
Note that related Euler factors are trivial in this case, we then have
\begin{itemize}
\item $\displaystyle{\msl_{\varphi\eta_K}(1)=\frac{L(1, E^{(\CK)})}{\Omega_{E/K}}}$,
\item $\displaystyle{\frac{\msl_{E/\CK}'(1)}{R_p(E/\CK)\tau(\varphi_\fp, \psi_\fp)^2}=\frac{L'(E/\CK, 1)}{ R_\infty(E/\CK) 8\pi^2(\phi, \phi)}}$.
\item $\displaystyle{\ord_p(|\Sha(E/\BQ)|)=
    \ord_p\left(\frac{\msl_\varphi'(1)}{R_p(E/\BQ)}\right)}$,
\item $\displaystyle{\ord_p\left(\frac{\msl_{E/K}'(1)}{\msl_\varphi'(1) \msl_{\varphi\eta_K}(1)}\right)=\ord_p \left(\frac{\tau(\varphi_\fp, \psi_\fp)^2\Omega_{E/K}^2}{8\pi^2(\phi, \phi)}\right)}$,
\item $\displaystyle{\ord_p\left(\frac{\Omega_{E/K}}{\Omega_E}\right)=\ord_p
    \left(\frac{R_p(E/\CK)}{R_p(E/\BQ)}\right)=0}$.
    \end{itemize}
It follows that
$$\ord_p(|\Sha(E/\BQ|))=\ord_p\left(\frac{L'(E/\BQ, 1)}{\Omega_E\cdot R_\infty(E/\BQ)}\right).$$
This proves Theorem \ref{main thm} (i). Assume that $E(\BQ)$ has rank one and $\Sha(E/\BQ)(p)$ is finite, or equivalently, $E(K)$ has $\CO_K$-rank one and $\Sha(E/K)$ is finite. By \cite{Bertrand}, the cyclotomic $p$-adic height pairing is non-degenerate. Thus both $\msl_{\varphi_E}$ and $\msl_{\ov{\varphi_E}}$ have exactly order $1$ at $s=1$, therefore $\msl_{E/\CK}$ has exactly order one at $s=1$. It follows from $p$-adic Gross-Zagier formula that the related Heegner point is non-trivial and therefore $L(E, s)$ has a simple zero at $s=1$. This completes the proof of Theorem \ref{main thm}.


\begin{thebibliography}{XXXX}
\addtocontents{Bibliography}

\bibitem{Bertrand} Daniel Bertrand, {\em Propri\'{e}t\'{e}s Arithm\'{e}tiques de Fonctions Th\^{e}ta  \`{a} plusieurs variables}, Lect. Notes in Math., vol 1068, pp. 17-22. Berlin-Heidelberg-New York-Tokyo: Springer 1984.

\bibitem{CST}  L. Cai, J.  Shu and Y. Tian,  {\em Explicit Gross-Zagier and Waldspurger formulae}. Algebra Number Theory 8 (2014), no. 10, 2523-2572.

\bibitem{CW2} J.Coates and A.Wiles,  {\em On p-adic L-functions and elliptic units}. J. Austral. Math. Soc. Ser. A 26 (1978), no. 1, 1-25.


\bibitem{DD} Daniel Diegni,  {The p-adic Gross Zagier formula on Shimura curves},  Preprint.


\bibitem{deshalit} E.de Shalit, {\em The Iwasawa theory of elliptic curves with complex multiplication}, Perspect. Math. Vol.3 (1987).


\bibitem{Fa} G. Faltings,  {\em Crystalline cohomology and p-adic Galois-representations}. Algebraic analysis, geometry, and number theory (Baltimore, MD, 1988),  Johns Hopkins Univ. Press, Baltimore, MD, 1989.
\bibitem{FH} S. Friedberg and J. Hoffstein, {\em Nonvanishing theorems for automorphic L-functions on GL(2).}  Ann. of Math. (2) 142 (1995), no. 2, 385-423.
\bibitem{KO} Kobayashi, Shinichi,  {\em The p-adic Gross-Zagier formula for elliptic curves at supersingular primes}. Invent. Math. 191 (2013), no. 3, 527 -629.


\bibitem{MTT} B. Mazur, J. Tate, and J. Teitelbaum, {\em On p-adic analogues of the conjectures of Birch and Swinnerton-Dyer}. Invent. Math. (1986) Volume 84, no. 1, pp 1-48.
\bibitem{PR1}  B,Perrin-Riou, {\em Groupe de Selmer d'une courbe elliptique à multiplication complexe},  Compositio Math. 43 (1981), no. 3, 387-417.

\bibitem{PR2} B. Perrin-Riou,  {\em Descente infinie et hauteur p-adique sur les courbes elliptiques à multiplication complexe}, Invent. Math. 70 (1982/83), no. 3, 369-398.

\bibitem{Perrin-Riou} B. Perrin-Riou, {\em  Points de Heegner et d\'eriv\'ees de fonctions
L p-adiques}, Invent. Math. 89 (1987), no. 3, pp. 455-510.

\bibitem{rubin87} K. Rubin, {\em Tate-Shafarevich groups and L-functions of elliptic curves with complex multiplication}, Invent. Math. 89 (1987), no. 3, 527-559.

\bibitem{rubin91} K. Rubin, {\em The "main conjectures" of Iwasawa theory for imaginary quadratic fields},  Invent. Math. 103 (1991), no. 1, 25-68.

\bibitem{Schneider1} P.Schneider,  {\em Iwasawa L -functions of varieties over algebraic number fields. A first approach},
 Invent. Math.  71  (1983),  no. 2, 251-293

\bibitem{Tian} Y. Tian, {\em Congruent Numbers and Heegner Points}, Cambridge J. of Math, Vol. 2.1.  117-161, 2014.

\bibitem{TYZ} Y. Tian, X. Yuan and S. Zhang, {\em Genus Periods, Genus Points and Congruent Number Problem}, to appear in Asian Journal of Mathematics.

\bibitem{Yager} R.Yager, {\em On two variable $p$-adic L-functions}, Ann. of Math. (2) 115 (1982), no. 2, 411-449.

\bibitem{YZZ}  X.Yuan, S. Zhang, and W. Zhang,  {\em The Gross-Zagier Formula on Shimura
Curves}, Annals of Mathematics Studies Number 184, 2013.

\end{thebibliography}
\end{document}